\newtheorem{theorem}{Theorem}[section]
\newtheorem{corollary}[theorem]{Corollary}
\newtheorem{lemma}[theorem]{Lemma}
\newtheorem{proposition}[theorem]{Proposition}
\theoremstyle{remark}
\theoremstyle{definition}
\newtheorem{example}[theorem]{Example}
\newtheorem{definition}[theorem]{Definition}
\numberwithin{equation}{subsection}
\def\Z{\mathbb{Z}}
\def\Q{\mathbb{Q}}
\def\R{\mathbb{R}}
\def\C{\mathbb{C}}
\newcommand{\Gm}{{{\mathbb G}_m}}
\newcommand{\der}{{\rm der}}
\newcommand{\Sd}{ \mathbb S}
\newcommand{\ra}{\rightarrow}
\newcommand{\lra}{\longrightarrow}
\newcommand{\hlra}{{\lhook\joinrel\longrightarrow}}
\title{Two results on the Hodge structure of complex tori}
\author{Fran\c{c}ois Charles}
\dedicatory{Pour Olivier, avec amitié, un petit appendice à son beau livre ``Tores et Variétés Abéliennes complexes''}
\begin{document}

\maketitle

\begin{abstract}
We prove two results regarding Hodge structures appearing in the cohomology of complex tori. First, we prove that if a polarizable Hodge structure appears in the cohomology of a complex torus $T$, it appears in the cohomology of an abelian variety isomorphic to a subquotient of $T$. Second, we prove a universality result for the Kuga-Satake construction applied to Hodge structures of $K3$ type that might not be polarized.
\end{abstract}

\section{Introduction}

The theme of this note is the Hodge structures that appear in the cohomology of complex tori. It is well-known that the functor which associates to a complex torus $T$ its first Betti cohomology group $H^1(T, \Q)$ together with its natural Hodge structure of weight $1$ is an equivalence of category between the category of complex tori up to isogeny and the category of rational Hodge structures of type $\{(1, 0), (0,1)\}$. Furthermore, such a rational Hodge structure comes from an abelian variety if and only if it admits a polarization, namely, a rational bilinear form that satisfies the Hodge-Riemann positivity relations. 

In general, we say that a Hodge structure $V$ is abelian if it appears as a direct factor, up to a Tate twist, of the cohomology of an abelian variety. Concretely, this means that $V$ appears as a direct factor of a Hodge structure of the form 
$$H^1(A, \Q)^{\otimes a}\otimes (H^1(A, \Q)^{\vee})^{\otimes b}\otimes\Q(c),$$
where $a, b$ and $c$ are positive integers with $a, b\geq 0$. Abelian Hodge structures are described in terms of their Mumford-Tate groups in \cite[Section 1]{Milne94}.

Our first result shows that for a polarizable Hodge structure $V$ to be of abelian type, it suffices for $V$ to appear in the cohomology of a complex torus $T$, whithout assuming that $T$ is an abelian variety. More precisely, we prove the following (see Theorem \ref{theorem:torus-abelian}):

\begin{theorem}
Let $V$ be a pure polarizable Hodge structure. Let $W$ be a Hodge structure of type $\{(0, 1),\,(1, 0)\}$ such that $V$ is isomorphic to a subquotient of the Hodge structure
$$W^{\otimes a}\otimes (W^\vee)^{\otimes b}\otimes\Q(c)$$
for some integers $a, b, c$ with $a, b\geq 0.$
Then there exists a polarizable Hodge structure $W'$ which is isomorphic to a direct sum of subquotients of $W$, and an injection of Hodge structures
$$V\hlra W'^{\otimes a'}\otimes (W'^\vee)^{\otimes b'}\otimes\Q(c')$$
for some integers $a', b', c'$ with $a', b'\geq 0.$
\end{theorem}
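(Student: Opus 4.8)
The plan is a two-step reduction: first replace the possibly non-semisimple $W$ by its semisimplification, then isolate inside the latter the polarizable part, which will be the required $W'$. Throughout write $T_{a,b,c}(H)=H^{\otimes a}\otimes(H^{\vee})^{\otimes b}\otimes\Q(c)$, so the hypothesis reads $V\cong B/A$ for sub-Hodge structures $A\subseteq B\subseteq T_{a,b,c}(W)$. I will use repeatedly that $V$, being polarizable, is semisimple; that a simple Hodge structure has reductive Mumford--Tate group (its unipotent radical would act trivially by irreducibility, hence be trivial); and consequently that a \emph{semisimple} Hodge structure $H$ has reductive Mumford--Tate group, so that all its tensor constructions $T_{a,b,c}(H)$ are again semisimple.

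Let $W^{\mathrm{ss}}$ be the semisimplification of $W$, i.e.\ the direct sum of its composition factors; each of these is a subquotient of $W$, of type $\{(0,1),(1,0)\}$. Write $W^{\mathrm{ss}}=W'\oplus Z$, where $W'$ collects the polarizable composition factors and $Z$ the others; then $W'$ is polarizable and is a direct sum of subquotients of $W$, the candidate of the theorem. A composition series of $W$ induces on $T_{a,b,c}(W)$ a finite filtration by sub-Hodge structures whose associated graded is canonically $T_{a,b,c}(W^{\mathrm{ss}})$; hence every composition factor of $T_{a,b,c}(W)$ is a composition factor, and so (by semisimplicity) a direct summand, of $T_{a,b,c}(W^{\mathrm{ss}})$. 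Since $V$ is a semisimple subquotient of $T_{a,b,c}(W)$, it is a direct sum of composition factors of $T_{a,b,c}(W)$, and therefore embeds as a direct factor of $T_{a,b,c}(W'\oplus Z)$.

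Expanding $T_{a,b,c}(W'\oplus Z)$ as a direct sum of external products $P\otimes Q$, with $P$ a summand of a tensor construction on $W'$ and $Q$ a summand of one on $Z$, each simple constituent of $V$ lies in a single such $P\otimes Q$; since $P$ is polarizable and $P^{\vee}\otimes(P\otimes Q)$ contains $Q$, the factor $Q$ is a polarizable simple summand of a tensor construction on $Z$. So the theorem reduces to what I take to be its real content: \emph{a direct sum $Z$ of non-polarizable simple Hodge structures of type $\{(0,1),(1,0)\}$ has no polarizable simple summand in any $T_{a,b,c}(Z)$ other than the Tate twists $\Q(c)$.} Granting this, every simple constituent of $V$ is a Tate twist of a polarizable summand of a tensor construction on $W'$; collecting these and making the exponents uniform by padding with the trace summand $\Q(0)\subseteq W'\otimes W'^{\vee}$ and with the polarization summands $\Q(\mp1)\subseteq W'^{\otimes2}$, $\Q(\pm1)\subseteq (W'^{\vee})^{\otimes2}$, realizes $V$ as a subquotient, hence — the target being polarizable, so semisimple — a direct factor of a single $T_{a',b',c'}(W')$, giving the desired injection.

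The main obstacle is precisely the italicized claim about $Z$. I would attack it via the reductive group $G=\mathrm{MT}(Z)$ acting on the semisimple module $Z$: a polarizable simple summand of $T_{a,b,c}(Z)$ is an irreducible constituent $U$ of that $G$-module on which $G$ preserves a bilinear form of the appropriate symmetry satisfying the Hodge--Riemann positivity; the point is that the \emph{failure} of such positivity on the defining module $Z$ itself — which is exactly the non-polarizability of each of its simple summands — should propagate, through the essentially unique $G$-invariant forms on the constituents of the tensor algebra, to all non-trivial constituents, leaving only the Tate pieces. Making this precise — tracking signatures of invariant forms on all Schur-type constituents, and dealing with the possible endomorphism algebras and the various possible shapes of $G$ — is the genuine difficulty. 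The remaining ingredients (semisimplification, compatibility of $\mathrm{gr}$ with $\otimes$, $(-)^{\vee}$ and Tate twists, and the passage from subquotient to direct factor) are formal.
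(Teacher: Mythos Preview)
Your reductions are sound: passing to $W^{\mathrm{ss}}$, using that $\mathrm{gr}$ commutes with tensor constructions, and splitting $W^{\mathrm{ss}}=W'\oplus Z$ into its polarizable and non-polarizable parts are all correct, and your argument that a simple polarizable $U\subseteq P\otimes Q$ forces some simple summand of $Q$ to be a quotient of the polarizable $U\otimes P^{\vee}$ is fine. The difficulty, as you yourself flag, is the italicized claim that a sum $Z$ of non-polarizable simple weight-one Hodge structures admits no polarizable simple subquotient in any $T_{a,b,c}(Z)$ except Tate objects. This claim is in fact true, but your proposed attack --- propagating the failure of Hodge--Riemann positivity through ``essentially unique'' invariant forms on Schur constituents --- is not the right mechanism and is unlikely to close the gap. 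Invariant forms on tensor constituents are not unique in general, and the positivity failure does not propagate in any straightforward sign-bookkeeping way (indeed the top exterior power already shows that some constituents become trivially polarizable).

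The paper proceeds differently and, in effect, furnishes the missing argument. Rather than proving your structural claim about $Z$, it works directly with the surjection $p:G=\mathrm{MT}(W^{\mathrm{ss}})\twoheadrightarrow H=\mathrm{MT}(V)$. One decomposes $G$ up to central isogeny as $T\times G_1\times\cdots\times G_r$, with $G_1$ a torus chosen so that $G_1\to Z(H)/w_{h}(\mathbb{G}_m)$ is an isogeny and $G_2,\dots,G_r$ the simple factors of $G^{\mathrm{der}}$; reorder so that exactly $G_1,\dots,G_k$ survive in $H$. Deligne's criterion (polarizability of $V$ $\Longleftrightarrow$ conjugation by $h(i)$ is a Cartan involution of $H/w_h(\mathbb{G}_m)$) then forces $G_1(\mathbb{R})$ to be compact and, for $2\le j\le k$, the inner automorphism $\mathrm{Ad}\,h(i)$ to be a Cartan involution of $G_{j,\mathbb{R}}$. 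A separate lemma (using that a simple weight-one Hodge structure has $\mathrm{MT}=\mathbb{G}_m\cdot(\text{single simple factor})$) converts this into: every simple summand of $W^{\mathrm{ss}}$ on which some $G_j$ with $j\le k$ acts nontrivially is polarizable. One then takes $W_p$ to be the sum of those summands and embeds $V$ into a tensor construction on $W_p$ via the faithful $G_1\cdots G_k$-action. Note that this argument, specialized to your $Z$ (all summands non-polarizable) and an arbitrary polarizable simple $U$ in $T_{a,b,c}(Z)$, forces $k=1$ and $G_1$ trivial, whence $\mathrm{MT}(U)=\mathbb{G}_m$ and $U$ is Tate --- so it \emph{does} prove your italicized claim, but through Cartan involutions, not through signatures of forms. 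That is the missing idea in your proposal.
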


In particular, those polarizable Hodge structures that appear in the cohomology of complex tori always come from algebraic varieties.

\bigskip

An instance of Hodge structures that appear in the cohomology of complex tori is given by the Kuga-Satake construction, first introduced in \cite{Deligne72}, that associates a Hodge structure $W$ of type $\{(0, 1),\,(1, 0)\}$ to any Hodge structure $V$ of $K3$ type endowed with a suitable quadratic form. The Hodge structure $W$ is polarizable if the quadratic form is a polarization, and $V$ is always a sub-Hodge structure of $\mathrm{End}(W)$. We prove in Theorem \ref{theorem:KS-univ} that $W$ satisfies a universality property: if $V$ is general enough and $W'$ is a Hodge structure of type $\{(0, 1),\,(1, 0)\}$  such that there exists 
an embedding
$$V\hlra W'^{\otimes a}\otimes (W'^\vee)^{\otimes b}\otimes\Q(c)$$
for some $a, b, c$ with $a, b$ nonnegative, then $W'$ shares a subquotient with the Kuga-Satake variety.

In the situation where $V$ is polarized, this universality theorem is folklore and stated for instance in \cite[Proposition 6]{vanGeemenVoisin16}, and essentially contained in \cite[1.3]{Deligne79}, following results of Satake \cite{Satake65}, see also \cite{CharlesKSuniv}. The result is also alluded to by Deligne in his paper \cite{Deligne72}. 

\bigskip

Our arguments for the proof of Theorem \ref{theorem:torus-abelian} and Theorem \ref{theorem:KS-univ} rely on the use of Mumford-Tate groups and basic elements of the theory of reductive groups. In section 2, we gather notation and elementary or folklore results. Section 3 is devoted to the proof of Theorem \ref{theorem:torus-abelian}, and section 4 proves Theorem \ref{theorem:KS-univ}.

\noindent\textbf{Acknowledgements.} We are grateful to Claire Voisin for raising the question of the universality of the Kuga-Satake construction without polarizations and related discussions. This project has received funding from the European Research Council (ERC) under the European Union's Horizon 2020 research and innovation programme (grant agreement No 715747).

\section{Notation and preliminary results}

\subsection{Hodge structures}

\subsubsection{}
We denote by $\Sd$ the \emph{Deligne torus}, namely, the real algebraic group defined as the Weil restriction of the multiplicative group $\Gm_{,\R}$ to $\R$. By definition, we have:
$$\Sd(\R)=\C^*\,\,\,\mathrm{and}\,\,\,\Sd(\C)=\C^*\times\C^*.$$

We denote by 
$$w : \Gm_{,\R}\lra \Sd$$
the morphism corresponding at the level of real points to the inclusion of $\R^*$ in $\C^*$, and by 
$$\mu : \Gm_{,\C}\lra \Sd_\C\simeq \Gm_{,\C}\times \Gm_{,\C}$$
the cocharacter corresponding to the morphism $z\mapsto (z, 1).$

\subsubsection{}
Let $V$ be a finite-dimensional vector space over $\Q$. A \emph{Hodge structure} on $V$ is the datum of a 
bigrading of the complex vector space $V_\C$:
$$V_\C=\bigoplus_{p, q\in\Z} V^{p,q}$$
such that for all integers $p, q$, the spaces $V^{p, q}$ and $V^{q, p}$ are conjugate and the spaces 
$$\bigoplus_{p+q=k} V^{p, q}$$
are defined over $\Q$ for all integers $k$. Given a subset $S$ of $\Z^2$, we say that $V$ is of type $S$ if $V^{p, q}=0$ when $(p, q)$ does not lie in $S$.

The Hodge structure is pure of some weight $k$ if $V^{p, q}=0$ when $p+q\neq k$. If $n$ is an integer, the Tate Hodge structure $\Q(n)$ is the unique pure Hodge structure of type $(-n, -n)$ on the rational vector space $\Q$ \footnote{Since we will need to use Tate Hodge structures of fractional weight, we find it more convenient to drop the factor $(2\pi i)^n$ in the definition of Tate Hodge structures. Of course, this does not change the isomorphism class of these Hodge structures.}.

The datum of an action of the torus $\Sd$ on the real vector space $V_\R$ is equivalent to a bigrading of the complex vector space $V_\C$:
$$V_\C=\bigoplus_{p, q\in\Z} V^{p,q}$$
such that for all integers $p, q$, the spaces $V^{p, q}$ and $V^{q, p}$ are conjugate -- here $V^{p, q}$ is the subspace of $V_\C$ on which $\C^*$ acts through $z^{-p}\overline{z}^{-q}.$ Via the morphism $w : \Gm_{,\R}\ra\Sd$, an action of $\Sd$ on $V_\R$ induces an action of $\Gm_{, \R}$ on $V_\R$. This latter action is defined over $\Q$ if and only if the spaces 
$$\bigoplus_{p+q=k} V^{p, q}$$
are defined over $\Q$ for all integers $k$, i.e., if and only if the bigrading above defines a Hodge structure on $V$. Given an action $h : \Sd\ra GL(V_\R)$, the corresponding Hodge structure on $V$ is pure if and only if the morphism
$$h\circ w : \Gm_{,\R}\lra GL(V_\R)$$
factors through the center of $GL(V)_\R$.

If $V$ is a pure Hodge structure of weight $k$, a \emph{polarization} of $V$ is a morphism of Hodge structures 
$$\phi : V\otimes_\Q V\lra\Q(-k)$$
such that the bilinear form
$$V_\R\otimes V_\R\lra \R,\,\, (x, y)\mapsto \phi_\R(x, h(i) y)$$
is definite positive.

\subsubsection{} We denote by $MT(V)$ the smallest algebraic subgroup $G$ of the algebraic group $GL(V)$ over $\Q$ such that $G_\R$ contains the image of $h$. The group $MT(V)$ is the \emph{Mumford-Tate group} of the Hodge structure $V$. Let
$$h : \Sd\lra MT(V)_\R$$
be the morphism that defines the Hodge structure on $V$. We denote by 
$$w_h : \Gm_{,\Q}\lra MT(V)$$
the morphism induced by $h\circ W$. It is a central cocharacter if and only if $V$ is pure.

\subsubsection{} We will need to consider \emph{fractional Hodge structures}. A fractional Hodge structure on a finite-dimensional vector space $V$ over $\Q$ is defined as the datum of a bigrading of the complex vector space $V_\C$:
$$V_\C=\bigoplus_{p, q\in\Q} V^{p,q}$$
such that for all rational numbers $p, q$, the spaces $V^{p, q}$ and $V^{q, p}$ are conjugate and the spaces 
$$\bigoplus_{p+q=k} V^{p, q}$$
are defined over $\Q$ for all rational numbers $k$. Given a subset $S$ of $\Q^2$, we say that $V$ is of type $S$ if $V^{p, q}=0$ when $(p, q)$ does not lie in $S$.

 If $a$ is a rational number, the fractional Tate Hodge structure $\Q(a)$ is the unique pure Hodge structure of type $(-a, -a)$ on the rational vector space $\Q$.

Fractional Hodge structures may be understood in terms of the Deligne torus as follows. Let $T$ be an algebraic torus. We denote by $\widetilde T$ the \emph{universal covering} of $T$, namely, the projective system $(T_n)_{n\in \mathbb N\setminus\{0\}}$ where $T_n=T$ for all positive $n$, $\mathbb N\setminus\{0\}$ is ordered b divisibility, and the transition map 
$$T_{mn}\lra T_{n}$$
is $x\mapsto x^m.$ A \emph{fractional morphism} from $T$ to an algebraic group $G$ is a morphism 
$$\phi : \widetilde T\lra G.$$
Such a morphism may be represented by an actual morphism 
$$\phi_n : T_n\lra G.$$

We denote by 
$$\widetilde w : \widetilde{\Gm}_{,\R}\lra \widetilde\Sd$$
the morphism induced by $w$.

As in the case of usual Hodge structures, fractional Hodge structures on $V$ correspond bijectively to those fractional morphisms
$$\widetilde h : \widetilde\Sd\lra GL(V)_\R$$
such that $\widetilde h\circ\widetilde w$ is defined over $\Q$. 

Given a fractional morphism $\widetilde h$ as above, we may find a positive integer $n$ and a commutative diagram
\[
\xymatrix{
\widetilde \Sd\ar[r]^{x\mapsto x^n}\ar[dr]^{\widetilde h_n} & \widetilde\Sd\ar[d]^{\widetilde h}\\
& GL(V)_\R
}
\]
in which $\widetilde h_n$ comes from an actual morphism $h_n : \Sd\ra GL(V)_\R$. The smallest algebraic subgroup $G$ of $GL(V)$ over $\Q$ such that $G_\R$ contains the image of $h_n$ does not depend on the choice of $n$, we will denote it again by $MT(V)$ and call it the Mumford-Tate group of $V$.


\subsubsection{} Fractional morphisms and fractional Hodge structures appear through the following elementary result.

\begin{proposition}\label{proposition:fractional-lift}
Let $k$ be a field of characteristic zero, let $T$ be an algebraic torus over $k$, and let 
$$h: T\lra H$$
be a morphism of algebraic groups over $k$. Let 
$$p : G\lra H$$
be an isogeny. Then $h$ lifts uniquely to a fractional morphism 
$$\widetilde h : \widetilde T\lra G.$$
\end{proposition}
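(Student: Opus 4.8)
The plan is to reduce the statement to the case where $p$ is an isogeny of \emph{tori}, and then exhibit the lift explicitly.

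\emph{Reduction to tori.} First I would replace $H$ by the image $T' := h(T)$, which is a closed subtorus of $H$: the homomorphic image of a torus is connected and diagonalizable, hence a torus, and the induced map onto the image is a closed immersion. Then I would set $S := (p^{-1}(T'))^{0}$, the identity component of the preimage of $T'$ in $G$. Since $p$ is an isogeny, $p^{-1}(T') = G\times_H T' \to T'$ is finite and surjective with finite kernel, and restricting to $S$ one checks (comparing dimensions) that $q := p|_S : S \to T'$ is still surjective with finite kernel $S\cap\ker p$, i.e.\ an isogeny. Here characteristic zero enters: $S$ is smooth, the derived subgroup $[S,S]$ is connected and maps trivially to the commutative group $T'$, hence lies in the finite group $\ker q$ and is therefore trivial, so $S$ is commutative; and the unipotent part of $S$ maps trivially to the torus $T'$, hence also lies in $\ker q$ and vanishes, so $S$ is a torus. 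Finally, if $\widetilde h : \widetilde T \to G$ is any fractional morphism with $p\circ\widetilde h = h$, represented by some $\phi : T_n \to G$, then $p\circ\phi$ represents the fractional morphism $h$, so its image equals $h(T) = T'$; thus $\phi(T)\subseteq p^{-1}(T')$, and since $T$ is connected, $\phi(T)\subseteq S$. Hence it suffices to lift $h : T\to T'$ uniquely to a fractional morphism $\widetilde T\to S$ along the isogeny of tori $q$.

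\emph{Existence.} Let $n$ be the exponent of the finite group $(\ker q)(\kb)$, so $\ker q\subseteq S[n]$. Then $[n]_S : S\to S$ kills $\ker q$ and therefore factors as $[n]_S = r\circ q$ for a unique homomorphism $r : T'\to S$; applying $q$ and cancelling the epimorphism $q$ gives $q\circ r = [n]_{T'}$. Now let $\widetilde h : \widetilde T\to S$ be the fractional morphism represented at level $n$ by $r\circ h : T_n = T\to S$. Then $q\circ\widetilde h$ is represented at level $n$ by $q\circ r\circ h = [n]_{T'}\circ h = h\circ[n]_T$, which is exactly the level-$n$ representative of the honest morphism $h$; hence $q\circ\widetilde h = h$ as fractional morphisms, and composing with the inclusions $S\hookrightarrow G$ and $T'\hookrightarrow H$ produces the desired lift of $h$.

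\emph{Uniqueness.} Let $\widetilde h_1,\widetilde h_2 : \widetilde T\to G$ be two lifts of $h$. By the reduction step both factor through $S$; choose a common level at which they are represented by $\phi_1,\phi_2 : T_N\to S$, and, after pulling back along a transition map, arrange $q\circ\phi_1 = q\circ\phi_2$ as honest morphisms. Then $\phi_1$ and $\phi_2$ differ by a homomorphism from the connected group $T_N = T$ to the finite group $\ker q$, which must be trivial; so $\phi_1=\phi_2$ and $\widetilde h_1 = \widetilde h_2$.

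The step requiring the most care is the reduction: checking that $S$ is a torus and that any lift automatically has image in $S$. This is where characteristic zero is used essentially (smoothness of finite group schemes, connectedness of derived subgroups). Once $q : S\to T'$ is available as an isogeny of tori, existence and uniqueness are a direct manipulation of the multiplication maps $[n]$ together with the colimit defining fractional morphisms.
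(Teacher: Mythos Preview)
Your proof is correct and follows essentially the same route as the paper: reduce to an isogeny onto the torus $h(T)$ by passing to the identity component of its preimage, then factor the $N$-th power map through the isogeny to produce the lift at level $N$, and verify $q\circ r=[N]$ by cancelling the epimorphism $q$. You are in fact more careful than the paper on two points it leaves implicit: you check that the reduced group $S$ is commutative (indeed a torus), which is needed for $x\mapsto x^N$ to be a homomorphism, and you spell out the uniqueness argument and why any lift must land in $S$.
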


\begin{proof}
The unicity statement is clear. To prove existence, we may replace both $T$ and $H$ with the image of $T$ in $H$, and $G$ with the identity component of the preimage of $h(T)$ in $G$.

Since $p$ has finite kernel, there exists a positive integer $N$ such that the kernel $K$ of $p$ is contained in the $N$-torsion subgroup of $G$. In particular, the morphism 
$$G\lra G,\,\,x\mapsto x^N$$
factors through a morphism $\phi :  T\ra G$: the composition
$$G\stackrel{p}{\lra} T\stackrel{\phi}{\lra} G$$
is $x\mapsto x^N.$ Consider the composition
$$\psi : T \stackrel{\phi}{\lra} G\stackrel{p}{\lra} T.$$
Then 
$$\psi\circ p : G\lra T$$
is the morphism $p\circ(\phi\circ p)$, i.e.
$$G\stackrel{x\mapsto x^N}{\lra} G\stackrel{p}{\lra} T.$$
Consider the morphism
$$\psi_N : T\lra T,\,\, x\mapsto x^N.$$
Then $\psi\circ p=\psi_N\circ p$, so that $\psi=\psi_N$ as $p$ is an epimorphism by e.g. \cite[Lemma 2.1]{Brion17}.

In particular, we obtain a commutative diagram
\[
\xymatrix{
T\ar[d]^{\psi_N}\ar[r]^\phi & G\ar[d]^p\\
T\ar[r]^{\mathrm{Id}_T} & T,
}
\]
and $\phi$ defines a fractional lift of $\mathrm{Id}_T$ to $G$.
\end{proof}

\subsection{The reductive quotient of a Mumford-Tate group and the semisimplification of a Hodge structure}

Let $V$ be a Hodge structure. Recall that $V$ is said to be simple if all sub-Hodge structures of $V$ are isomorphic to $0$ or $V$ itself. We say that $V$ is semisimple if $V$ is isomorphic to a direct sum of simple Hodge structures. It is well-known that polarizable Hodge structures are semisimple and that their Mumford-Tate group is reductive. 

In general, there exists an increasing filtration
$$0=V_0\subset\ldots\subset V_n=V$$
of $V$ by sub-Hodge structures such that, for all $i$ between $0$ and $n-1$, the quotient Hodge structure
$$V_{i+1}/V_i$$
is simple. The direct sum
$$V^{\mathrm{ss}}:=\bigoplus_{i=0}^{n-1} V_{i+1}/V_i$$
is a semisimple Hodge structure by construction, and it is readily checked that its isomorphism class does not depend on the choice of the filtration $V_\bullet.$

Let $G$ be the Mumford-Tate group of $V$, and let $h : \Sd\ra G_\R$ be the morphism defining the Hodge structure on $V$.

\begin{proposition}\label{proposition:reductive-semisimple}
The Hodge structure $V$ is semisimple if and only if the Mumford-Tate group $G$ is reductive.
\end{proposition}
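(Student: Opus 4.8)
The plan is to prove both implications by relating the action of $G$ on $V$ to the module-theoretic structure of $V$ as a representation of $G$. The key observation is that sub-Hodge structures of $V$ are exactly the $G$-subrepresentations of $V$: since $MT(V) = G$ contains the image of $h$ and is defined over $\Q$, a $\Q$-subspace $V' \subset V$ is a sub-Hodge structure if and only if $V'_\R$ is $h(\Sd)$-stable, which by Zariski density considerations (and the definition of $MT(V)$ as the smallest $\Q$-group whose real points contain $\im h$) is equivalent to $V'$ being $G$-stable. Hence $V$ is semisimple as a Hodge structure if and only if $V$ is semisimple as a representation of $G$.

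For the direction ``$G$ reductive $\Rightarrow$ $V$ semisimple'': the inclusion $G \hra GL(V)$ is a faithful representation, so $V$ is a faithful $G$-module. A standard fact in the representation theory of linear algebraic groups in characteristic zero is that if $G$ is reductive then every algebraic representation of $G$ is semisimple; applying this to $V$ gives that $V$ decomposes as a direct sum of simple $G$-submodules, hence as a direct sum of simple sub-Hodge structures. So $V$ is semisimple.

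For the converse ``$V$ semisimple $\Rightarrow$ $G$ reductive'': let $U$ be the unipotent radical of $G$. One shows $U$ acts trivially on $V$ as follows. Because $V$ is a semisimple $G$-module, it is a direct sum of simple $G$-modules $V_j$; on each $V_j$ the unipotent normal subgroup $U$ acts with a nonzero space of invariants (Lie--Kolchin / the fixed-point theorem for unipotent groups acting on a nonzero space), and that invariant subspace is $G$-stable since $U$ is normal, hence equals $V_j$ by simplicity. Therefore $U$ acts trivially on each $V_j$, hence on $V$; but $G \hra GL(V)$ is faithful, so $U = \{e\}$ and $G$ is reductive.

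The main obstacle, and the only point requiring care, is the first step: the precise dictionary ``sub-Hodge structures $=$ $G$-subrepresentations.'' One direction is immediate ($G$-stable subspaces are $h$-stable since $\im h \subset G_\R$, hence are sub-Hodge structures once defined over $\Q$). The reverse direction uses that $MT(V)$ is by definition the smallest $\Q$-subgroup of $GL(V)$ whose real locus contains $\im h$: if $V' \subset V$ is a sub-Hodge structure, the stabilizer of $V'$ in $GL(V)$ is a $\Q$-subgroup whose real points contain $\im h$, hence contains $G = MT(V)$, so $V'$ is $G$-stable. Everything else is a direct invocation of standard structure theory of reductive and unipotent groups over a field of characteristic zero; I do not expect any genuine difficulty beyond assembling these pieces. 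Note that nothing here requires $V$ to be pure, so the statement holds for general (even fractional) Hodge structures with the same argument, using $MT(V)$ as defined via $h_n$.
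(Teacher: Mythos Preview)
Your proposal is correct and follows essentially the same approach as the paper: both identify sub-Hodge structures with $G$-stable subspaces, deduce that semisimplicity of $V$ as a Hodge structure coincides with semisimplicity as a $G$-representation, and then invoke the standard facts that reductive groups in characteristic zero have semisimple representations while the unipotent radical acts trivially on any semisimple module. Your version simply spells out in more detail the dictionary step and the Lie--Kolchin argument that the paper leaves implicit.
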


\begin{proof}
The sub-Hodge structures of $V$ are exactly the $G$-invariant subspaces of $V$. As a consequence, $V$ is semisimple as a Hodge structure if and only if $V$ is semisimple as a representation of $G$. 

If $V$ is semisimple, then the unipotent radical of $G$ acts trivially on $V$. Since $G$ acts faithfully on $V$, this implies that $G$ is reductive. 

Conversely, if $G$ is reductive, then any representation of $G$ is semisimple, so that $V$ is semisimple.
\end{proof}

We may describe the Mumford-Tate group of the semisimplification of $V$ in general.

\begin{proposition}
The Mumford-Tate group of $V^{\mathrm{ss}}$ may be identified to the reductive quotient $G^{\mathrm{red}}$ of $G$. Under this identification, the morphism $\Sd\ra G^{\mathrm{red}}_\R$ defining the Hodge structure on $V^{\mathrm{ss}}$ is the composition of $h$ with the quotient map $G_\R\ra G_\R^{\mathrm{red}}.$
\end{proposition}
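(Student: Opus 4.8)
The plan is to unwind the filtration $V_\bullet$ on the level of Mumford-Tate groups. Let $G = MT(V) \subset GL(V)$, with $h : \Sd \to G_\R$. Since the sub-Hodge structures of $V$ are exactly the $G$-invariant subspaces, the filtration $0 = V_0 \subset \cdots \subset V_n = V$ is a filtration of $V$ by $G$-subrepresentations, and $V^{\mathrm{ss}} = \bigoplus_i V_{i+1}/V_i$ is the associated graded representation. Let $P \subset GL(V)$ be the parabolic-type stabilizer of the flag $V_\bullet$; then $G \subset P$, and there is a natural map $P \to \prod_i GL(V_{i+1}/V_i)$ given by the action on the graded pieces, whose kernel is the unipotent group $U$ of endomorphisms acting trivially on each $V_{i+1}/V_i$. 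Write $q : P \to \prod_i GL(V_{i+1}/V_i)$ for this map and set $\overline{G} := q(G)$, so that $\overline{G}$ is the image of $G$ acting on $V^{\mathrm{ss}}$. The composition $\Sd \xrightarrow{h} G_\R \xrightarrow{q} \overline{G}_\R$ is precisely the morphism defining the Hodge structure on $V^{\mathrm{ss}}$, because the Hodge bigrading on $(V_{i+1}/V_i)_\C$ is the one induced by that of $V_\C$ (the subquotients of a Hodge structure carry the induced Hodge structure). Hence by definition $MT(V^{\mathrm{ss}})$ is the smallest $\Q$-subgroup of $\prod_i GL(V_{i+1}/V_i)$ whose real points contain the image of $q \circ h$; since $\overline{G}$ is a $\Q$-subgroup containing this image, $MT(V^{\mathrm{ss}}) \subseteq \overline{G}$, and conversely $\overline{G}$, being generated over $\Q$ by the image of $q\circ h$ together with... — more carefully, $\overline{G} = q(G) = q(MT(V))$ and $MT(V)$ is generated by the $\mathrm{Gal}$-conjugates of $h$, so $\overline{G}$ is generated by their images, giving $\overline{G} \subseteq MT(V^{\mathrm{ss}})$. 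Thus $MT(V^{\mathrm{ss}}) = \overline{G} = G/(G\cap U)$.

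It then remains to identify $G \cap U$ with the unipotent radical $R_u(G)$, so that $\overline{G} = G/R_u(G) = G^{\mathrm{red}}$. The inclusion $G \cap U \subseteq R_u(G)$ holds because $G \cap U$ is normal in $G$ (it is the kernel of the homomorphism $q|_G$) and unipotent (being a subgroup of $U$), hence contained in the maximal connected normal unipotent subgroup — one should be slightly careful and pass to identity components, noting $G \cap U$ is connected since $U$ is unipotent in characteristic zero and... in fact it is cleanest to observe $G/(G \cap U) \cong \overline{G}$ acts faithfully on the semisimple representation $V^{\mathrm{ss}}$, hence is reductive by the argument in the proof of Proposition \ref{proposition:reductive-semisimple}, so $G \cap U \supseteq R_u(G)$; combined with the reverse inclusion this gives $G \cap U = R_u(G)$. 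Therefore $\overline{G} \cong G^{\mathrm{red}}$ and, tracing through, the morphism $\Sd \to G^{\mathrm{red}}_\R$ is $q \circ h$, i.e. the composition of $h$ with the quotient $G_\R \to G^{\mathrm{red}}_\R$, as claimed.

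The main obstacle, such as it is, is bookkeeping rather than depth: one must check that the Hodge structure on $V^{\mathrm{ss}}$ induced by the filtration really is the one whose defining morphism is $q \circ h$ (this is where one uses that a subquotient of a Hodge structure has its Hodge bigrading induced compatibly, together with the fact that the isomorphism class of $V^{\mathrm{ss}}$ is independent of the chosen filtration, already noted in the text), and that the two characterizations of $MT(V^{\mathrm{ss}})$ — as the smallest $\Q$-group containing the image of $q\circ h$, and as $q(MT(V))$ — agree, which comes down to the standard fact that $MT(V)$ is the $\Q$-Zariski closure of the subgroup generated by the image of $h$ and its Galois conjugates and that taking images under the $\Q$-morphism $q$ commutes with this closure. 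I would also remark that an alternative, perhaps slicker route is purely group-theoretic: $V^{\mathrm{ss}}$ is a faithful semisimple representation of $G^{\mathrm{red}}$ (the graded pieces $V_{i+1}/V_i$ being the simple constituents, on which $R_u(G)$ acts trivially), equipped with the Hodge morphism $\bar h$ induced by $h$; one checks $MT(V^{\mathrm{ss}})$, viewed inside $GL(V^{\mathrm{ss}})$, coincides with the image of $G^{\mathrm{red}}$, using minimality on both sides. I would present the filtration/parabolic version as the main argument since it makes the identification of the Hodge morphism transparent.
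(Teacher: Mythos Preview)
Your argument is correct and follows essentially the same route as the paper: identify the kernel of the $G$-action on $V^{\mathrm{ss}}$ with the unipotent radical of $G$, and check that the induced morphism from $\Sd$ has Zariski-dense image. The only cosmetic difference is that you package the kernel as $G\cap U$ for $U$ the unipotent radical of the flag stabilizer, whereas the paper works directly with $R_u(G)$; your ``Galois conjugates'' justification of $\overline{G}\subseteq MT(V^{\mathrm{ss}})$ is a slightly roundabout way of saying what the paper says in one line, namely that $h$ has Zariski-dense image in $G$, hence $q\circ h$ has Zariski-dense image in $q(G)$.
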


\begin{proof}
Consider an increasing filtration
$$0=V_0\subset\ldots\subset V_n=V$$
of $V$ by sub-Hodge structures such that, for all $i$ between $0$ and $n-1$, the quotient Hodge structure
$$V_{i+1}/V_i$$
is simple. Then the spaces $V_i$ are invariant under the action of $G$ on $V$.

Let $U$ be the unipotent radical of $G$. By assumption, the representation of $G$ on $V_{i+1}/V_i$ is simple for all $i$, so that $U$ acts trivially on $V_{i+1}/V_i$. As a consequence, the action of $G$ on $V$ induces an action of the reductive quotient $G/U=G^{\mathrm{red}}$ on $V^{\mathrm{ss}}$. By functoriality, the Hodge structure on $V^{\mathrm{ss}}$ is defined by this action and  the morphism 
$$h^{\mathrm{ss}} : \Sd\ra G^{\mathrm{red}}_\R$$ 
defined as the composition of $h$ with the quotient map $G_\R\ra G_\R^{\mathrm{red}}.$

Since $h$ has dense image in $G$, $h^{\mathrm{ss}}$ has dense image in $G^{\mathrm{red}}_\R.$ To prove that $G^{\mathrm{red}}$ is the Mumford-Tate group of $V$, it remains to prove that the representation of $G^{\mathrm{red}}$ on $V^{\mathrm{ss}}$ is faithful, i.e., that the kernel of the action of $G$ on $\bigoplus_{i=0}^{n-1} V_{i+1}/V_i$ is the unipotent radical $U$. Clearly, this kernel is normal and unipotent, so that it is contained in $U$, which finishes the proof.
\end{proof}

\section{Hodge structures coming from complex tori are of abelian type}

\subsection{Statement of the theorem}

The main theorem of this section is the following.

\begin{theorem}\label{theorem:torus-abelian}
Let $V$ be a pure polarizable Hodge structure. Let $W$ be a Hodge structure of type $\{(0, 1),\,(1, 0)\}$ such that $V$ is isomorphic to a subquotient of the Hodge structure
$$W^{\otimes a}\otimes (W^\vee)^{\otimes b}\otimes\Q(c)$$
for some integers $a, b, c$ with $a, b\geq 0.$
Then there exists a polarizable direct factor $W'$ of the Hodge structure $W^{\mathrm{ss}}$ and an injection of Hodge structures
$$V\hlra W'^{\otimes a'}\otimes (W'^\vee)^{\otimes b'}\otimes\Q(c')$$
for some integers $a', b', c'$ with $a', b'\geq 0.$

%
%

\end{theorem}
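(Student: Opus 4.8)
The plan is to reduce everything to a statement about the reductive quotient of the Mumford-Tate group and the cocharacter $\mu$ that encodes the Hodge filtration. Let $G = MT(W)$, with $h : \Sd \to G_\R$ the defining morphism; since $W$ has type $\{(1,0),(0,1)\}$, the cocharacter $\mu_h$ (the $\mu$-part of $h_\C$) has weights in $\{0,-1\}$ on $W_\C$, i.e. $\mu_h$ acts with at most two eigenvalues on every $G$-representation occurring in the tensor algebra generated by $W$. The semisimplification $W^{\mathrm{ss}}$ has Mumford-Tate group $G^{\mathrm{red}} = G/U$ (by the proposition proved above), and the image of $\mu_h$ in $G^{\mathrm{red}}_\C$ still has the ``two-eigenvalue on $W^{\mathrm{ss}}$'' property. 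The point is that $V$, being a polarizable hence semisimple Hodge structure appearing as a subquotient of $W^{\otimes a}\otimes(W^\vee)^{\otimes b}\otimes\Q(c)$, actually appears inside $(W^{\mathrm{ss}})^{\otimes a}\otimes((W^{\mathrm{ss}})^\vee)^{\otimes b}\otimes\Q(c)$ as a direct factor — because subquotients of a representation of $G$ factor through $G^{\mathrm{red}}$ on their semisimplification, and a semisimple subquotient of a semisimple module is a direct factor. So without loss of generality we may replace $W$ by $W^{\mathrm{ss}}$ and assume $G$ is reductive.

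Now $G$ is reductive and carries a cocharacter $\mu = \mu_h$ (a priori only a fractional cocharacter, but by Proposition \ref{proposition:fractional-lift} applied to the isogeny onto its image we lose nothing) whose action on the faithful representation $W^{\mathrm{ss}}$, and hence on the adjoint representation and on all of $\mathrm{End}(W^{\mathrm{ss}})$, has at most two eigenvalues; said differently, $\mathrm{ad}(\mu)$ has eigenvalues in $\{-1,0,1\}$ on $\Lie G$. The standard structure theory of such ``minuscule'' or ``short'' cocharacters (this is exactly the Hodge-theoretic incarnation of abelian-type Shimura data, cf. Milne) tells us that the image of $\mu$ generates, together with its conjugates, a subgroup whose simple factors are of classical type $A$, $B$, $C$, $D$ with $\mu$ minuscule — precisely the situation realized by the cohomology of abelian varieties. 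I would make this precise by decomposing $G^{\mathrm{der}}$ into simple factors, observing that $\mu$ projects to a minuscule (or trivial) cocharacter on each, and on each such factor choosing the standard minuscule representation $W'_i$; the type $\{(1,0),(0,1)\}$ condition forces these to be the defining/spin representations that occur inside tensor powers of $H^1$ of an abelian variety. Concretely, I would produce $W'$ as the direct sum over the simple factors of $G$ of the minimal faithful Hodge substructures of $W^{\mathrm{ss}}$ realizing those minuscule weights, check these are polarizable direct factors of $W^{\mathrm{ss}}$ (polarizability: $W^{\mathrm{ss}}$ need not be polarizable, but its polarizable part is a direct factor, and the argument should be arranged to land there — this is where the polarizability of $V$ feeds back in, since $V$ generates through tensor constructions a polarizable piece), and then verify that $V$, being built by tensor/subquotient operations from $W^{\mathrm{ss}}$, already embeds into the tensor algebra of $W'$ because $W'$ carries the same Mumford-Tate group up to central isogeny.

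Here is the cleaner route I would actually take, avoiding an explicit case analysis by type. Let $G = MT(W^{\mathrm{ss}})$, reductive, with $V$ a direct factor of $T := (W^{\mathrm{ss}})^{\otimes a}\otimes((W^{\mathrm{ss}})^\vee)^{\otimes b}\otimes\Q(c)$. The Mumford-Tate group $G' = MT(V)$ is a quotient of $G$ (it is the image of $G$ in $GL(V)$). By Tannakian duality, $V$ embeds into the tensor algebra of \emph{any} faithful representation of $G'$; so it suffices to find a polarizable direct factor $W'$ of $W^{\mathrm{ss}}$ on which $G'$ — equivalently, a group isogenous to $G'$, after using Proposition \ref{proposition:fractional-lift} to pass to fractional morphisms — acts faithfully, and such that $W'$ still has type $\{(1,0),(0,1)\}$ (this last point is automatic since any sub-Hodge-structure of $W^{\mathrm{ss}}$ has that type). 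Decompose $W^{\mathrm{ss}} = \bigoplus_j U_j$ into simple Hodge structures; $G$ acts on each $U_j$, the kernels $N_j \subset G$ intersect in the trivial group (faithfulness on $W^{\mathrm{ss}}$), and we need a subfamily whose kernels still intersect trivially \emph{after projecting to $G' = G/(\text{common kernel for } V)$} and which consists of polarizable $U_j$'s. The polarizable simple factors of $W^{\mathrm{ss}}$ span a polarizable direct factor $W^{\mathrm{pol}}$; the heart of the proof — and the step I expect to be the main obstacle — is showing that the action of $G$ on $W^{\mathrm{pol}}$ already factors faithfully through $G'$, i.e. that the non-polarizable simple constituents of $W^{\mathrm{ss}}$ contribute nothing new to the Mumford-Tate group of $V$. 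This is where one must genuinely use that $V$ is polarizable: a polarizable Hodge structure has reductive Mumford-Tate group with a polarizable faithful representation, and one leverages Deligne's criterion (the existence of a polarization is equivalent to a positivity/compactness property of the Mumford-Tate group together with $h$) to argue that the ``non-polarizable directions'' of $G$ act trivially on $V$. I would isolate this as a lemma: if $V$ is a polarizable direct factor of $W^{\mathrm{ss}}$-tensors, then $MT(V)$ is a quotient of $MT(W^{\mathrm{pol}})$, equivalently $\ker(G \to GL(W^{\mathrm{pol}})) \subseteq \ker(G \to GL(V))$; granting it, take $W' = W^{\mathrm{pol}}$, which is a polarizable direct factor of $W^{\mathrm{ss}}$, and conclude by Tannakian reconstruction that $V$ embeds into its tensor algebra.
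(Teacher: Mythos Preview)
Your strategy is the paper's: pass to $W^{\mathrm{ss}}$ so that $G = MT(W)$ is reductive, find a polarizable summand $W'$ large enough that $MT(V)$ is (up to center) a quotient of $MT(W')$, and conclude by Tannakian reconstruction. You correctly locate the heart of the argument in your final paragraph, and you correctly point at Deligne's Cartan-involution criterion. But you have not proved the key lemma, and that is where the content is. The paper's argument runs as follows. Write $G$ up to central isogeny as $T \times G_1 \times \cdots \times G_r$ with $T$ a central torus, $G_1$ a torus with $G_1(\R)$ compact, and $G_2,\dots,G_r$ the simple factors of $G^{\der}$; reorder so that exactly $G_1,\dots,G_k$ map isogenously into $H = MT(V)$ while $G_{k+1},\dots,G_r$ die under $p$. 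Polarizability of $V$ means conjugation by $h_H(i)$ is a Cartan involution of $H^{\mathrm{ad}}$; pulling back along each isogeny $G_j \to p(G_j)\subset H^{\mathrm{ad}}$ shows that $h_G(i)$-conjugation is a Cartan involution of $G_{j,\R}$ for $2\le j\le k$. A separate structural step (Proposition~\ref{proposition:unique-action}, resting on the weight-one tensor lemma Proposition~\ref{proposition:tensor-abelian}) shows that each simple summand of $W$ on which a given $G_j$ acts nontrivially has Mumford-Tate group $\Gm\cdot \pi(G_j)$; combined with the Cartan involution this makes those summands polarizable. Your sketch gestures at this chain but does not supply it, and your middle paragraph on minuscule cocharacters and Dynkin types is irrelevant here---that classification is what drives the Kuga-Satake universality theorem, not this one.

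There is also a genuine wrinkle with the center that your formulation hides. The literal inclusion $\ker(G\to GL(W^{\mathrm{pol}}))\subseteq\ker(G\to GL(V))$ need not hold: the torus $T$ can act on $V$ through a nontrivial scalar (via the weight) while acting by characters on $W^{\mathrm{pol}}$ that do not detect it. So one cannot get a $G$-equivariant, hence directly Hodge, embedding. The paper instead obtains an embedding $V\hookrightarrow W_p^{\otimes a'}\otimes (W_p^\vee)^{\otimes b'}$ equivariant only for the subgroup $G_1\cdots G_k$ (on whose faithful module $W_p=\bigoplus_{j\le k}W_j$ Tannakian reconstruction applies, Proposition~\ref{proposition:faithful}), and then runs a separate argument with \emph{fractional} Hodge structures: lift $h_G$ along the isogeny, compare the $G_1\times\cdots\times G_k$-components of $h$ on source and target, and deduce that the linear embedding becomes a morphism of Hodge structures after a single Tate twist $\Q(c')$. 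Finally, a small correction to your first reduction: the exponents $(a,b)$ may change when passing from $W$ to $W^{\mathrm{ss}}$, since semisimplification does not commute with tensor products; one must re-invoke \cite[I, Prop.~3.1(a)]{DeligneMilneOgusShih} for $G^{\mathrm{red}}$ to get new exponents $(a',b')$.
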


In particular, $V$ appears in the cohomology of an abelian variety, so we obtain the following corollary.

\begin{corollary}
Let $V$ be a polarizable Hodge structure. Let $W$ be a Hodge structure of type $\{(0, 1),\,(1, 0)\}$ such that $V$ is isomorphic to a subquotient of the Hodge structure
$$W^{\otimes a}\otimes (W^\vee)^{\otimes b}\otimes\Q(c)$$
for some integers $a, b, c$ with $a, b\geq 0.$ Then $V$ is isomorphic to a direct factor of a Hodge structure arising from the cohomology of an algebraic variety.
\end{corollary}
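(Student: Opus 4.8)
The plan is to deduce this directly from Theorem~\ref{theorem:torus-abelian}. First I would reduce to the case in which $V$ is pure. Since $V$ is polarizable, in its weight decomposition $V=\bigoplus_k V_k$ each $V_k$ is pure and again polarizable (a direct summand of a polarizable Hodge structure is polarizable), and each $V_k$, being a sub-Hodge structure of $V$, is again isomorphic to a subquotient of $W^{\otimes a}\otimes(W^\vee)^{\otimes b}\otimes\Q(c)$. It therefore suffices to treat each $V_k$ separately, so from now on we assume $V$ is pure.

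Now apply Theorem~\ref{theorem:torus-abelian}: there is a polarizable direct factor $W'$ of $W^{\mathrm{ss}}$ and an injection of Hodge structures $V\hookrightarrow W'^{\otimes a'}\otimes(W'^\vee)^{\otimes b'}\otimes\Q(c')$ with $a',b'\geq 0$. Since $W^{\mathrm{ss}}$ has type $\{(0,1),(1,0)\}$, so does $W'$: a nonzero direct factor of such a Hodge structure has type $\{(0,1),(1,0)\}$, because $(W')^{1,0}$ and $(W')^{0,1}$ are exchanged by complex conjugation, so neither can vanish alone. Being polarizable of this type, $W'$ is isomorphic to $H^1(A,\Q)$ for some abelian variety $A$, by the equivalence of categories recalled in the introduction and the characterization there of the Hodge structures of type $\{(1,0),(0,1)\}$ that come from abelian varieties.

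Next I would upgrade the injection to a direct-sum decomposition. The Hodge structure $W'^{\otimes a'}\otimes(W'^\vee)^{\otimes b'}\otimes\Q(c')$ is polarizable, since polarizability is preserved by tensor products, duals and Tate twists; hence it is semisimple, and the inclusion of $V$ splits. Thus $V$ is a direct factor of $H^1(A,\Q)^{\otimes a'}\otimes(H^1(A,\Q)^\vee)^{\otimes b'}\otimes\Q(c')$. Writing $H^1(A,\Q)^\vee\simeq H^1(\widehat A,\Q)$ for the dual abelian variety (equivalently, Poincar\'e duality $H^1(A,\Q)^\vee\simeq H^{2g-1}(A,\Q)(g)$), the K\"unneth formula identifies this Hodge structure with a Tate twist of the summand of $H^{a'+b'}(B,\Q)$ of degree one in each factor, where $B=A^{a'}\times\widehat A^{b'}$ is an abelian variety; in particular that summand is a direct factor of $H^{a'+b'}(B,\Q)$. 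Hence $V$ is a direct factor of $H^{a'+b'}(B,\Q)(c')$, which exhibits $V$ — up to a Tate twist — as a direct factor of the cohomology of an abelian variety, i.e. as an abelian Hodge structure in the sense of the introduction; when $c'\leq 0$ one may even absorb the twist by replacing $B$ with $B\times(\PP^1)^{-c'}$ and passing to degree $a'+b'-2c'$.

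I do not expect a genuine obstacle here: once Theorem~\ref{theorem:torus-abelian} is granted, the statement is a formal consequence. The only points that call for a word of care are the reduction to the pure case, the appeal to semisimplicity of polarizable Hodge structures in order to turn the embedding of $V$ into a \emph{direct factor} rather than merely a sub-Hodge structure, and the elementary bookkeeping with Poincar\'e duality, the K\"unneth formula and Tate twists needed to realize the tensor construction inside the cohomology of a single abelian variety.
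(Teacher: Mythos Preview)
Your argument is correct and follows exactly the route the paper takes: the paper's proof of the corollary is the single remark ``In particular, $V$ appears in the cohomology of an abelian variety,'' and you have simply spelled out the standard details (semisimplicity of polarizable Hodge structures, K\"unneth, Poincar\'e duality, absorbing Tate twists). Two cosmetic remarks: the reduction to the pure case is superfluous since any subquotient of the pure Hodge structure $W^{\otimes a}\otimes(W^\vee)^{\otimes b}\otimes\Q(c)$ is already pure, and the identification $H^1(A,\Q)^\vee\simeq H^1(\widehat A,\Q)$ is off by a Tate twist (your Poincar\'e-duality alternative is the clean formulation), but neither affects the conclusion.
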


%
%
%
%
%
%
%

\subsection{Mumford-Tate groups of Hodge structures of weight $1$}\label{subsection:MT-ab}

\subsubsection{} We start with an elementary result on Hodge structures of weight $1$.

\begin{proposition}\label{proposition:tensor-abelian}
Let $W$ be a pure Hodge structure of type $\{(0, 1),\,(1, 0)\}$. Let $W_1, \ldots, W_k$ be fractional Hodge structures and assume that there is an isomorphism of fractional Hodge structures 
$$W\simeq W_1\otimes\ldots\otimes W_k.$$
Then there exists a unique integer $i$ between $1$ and $k$ with the following property: for all $j\neq i$, there exists a rational number $a_j$ such that $W_j$ is a direct sum of copies of the fractional Hodge structure $\Q(a_j)$. Furthermore, there exists a rational number $a_i$ such that $W_i$ is a fractional Hodge structure of type $\{(a_i+1, a_i),\, (a_i, a_i+1)\}.$
\end{proposition}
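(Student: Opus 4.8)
The plan is to convert the hypothesis into a purely combinatorial fact about Minkowski sums of finite subsets of $\Q^2$, and then to read off the structure of each factor directly. For a fractional Hodge structure $U$ write $\operatorname{HT}(U):=\{(p,q)\in\Q^2 : U^{p,q}\neq 0\}$. This set is finite, it is empty precisely when $U=0$, and it is stable under the involution $\sigma\colon (p,q)\mapsto(q,p)$ because $U^{p,q}$ and $U^{q,p}$ are complex conjugate. Since $\otimes_\C$ is exact and a tensor product of nonzero $\C$-vector spaces is nonzero, one has $\operatorname{HT}(W_1\otimes\cdots\otimes W_k)=\operatorname{HT}(W_1)+\cdots+\operatorname{HT}(W_k)$, the Minkowski sum inside $\Q^2$. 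We may assume $W\neq 0$ (the only case of interest, and implicit in the statement); then every $W_j\neq 0$, and as $\operatorname{HT}(W)\subseteq\{(0,1),(1,0)\}$ is nonempty and $\sigma$-stable it equals $\{(0,1),(1,0)\}$. Setting $S_j:=\operatorname{HT}(W_j)$, a nonempty finite $\sigma$-stable subset of $\Q^2$, we are reduced to the statement that $S_1+\cdots+S_k=\{(0,1),(1,0)\}$ forces all but one of the $S_j$ to be a single diagonal point, and the remaining one to be $\{(a,a+1),(a+1,a)\}$.

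First I would invoke the elementary inequality $|A+B|\ge|A|+|B|-1$, valid for nonempty finite subsets $A,B$ of any totally ordered abelian group — here $\Q^2$ with the lexicographic order — which follows immediately by exhibiting an explicit chain of $|A|+|B|-1$ distinct elements of $A+B$. Iterating gives $|S_1|+\cdots+|S_k|-(k-1)\le|S_1+\cdots+S_k|=2$, and since each $|S_j|\ge1$ this leaves only two possibilities: either $|S_j|=1$ for every $j$, or there is a single index $i$ with $|S_i|=2$ and $|S_j|=1$ for $j\neq i$. The first is impossible, since a Minkowski sum of singletons is a singleton whereas $\{(0,1),(1,0)\}$ has two elements. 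Hence there is a unique $i$ with $|S_i|=2$, and this is the index claimed in the statement.

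Next I would identify the factors. For $j\neq i$, the singleton $S_j$ is $\sigma$-stable, hence lies on the diagonal, say $S_j=\{(c_j,c_j)\}$; then $W_j$ is concentrated in the single bidegree $(c_j,c_j)$, so $W_j\simeq\Q(-c_j)^{\oplus\dim W_j}$ and $a_j:=-c_j$ works. Writing $d:=\sum_{j\neq i}c_j$ we get $S_i+(d,d)=\{(0,1),(1,0)\}$, so $S_i=\{(-d,1-d),(1-d,-d)\}$; with $a_i:=-d$ this says precisely that $W_i$ has type $\{(a_i,a_i+1),(a_i+1,a_i)\}$. For the uniqueness clause, if some $i'\neq i$ also had the stated property, then applying it to the index $i\neq i'$ would force $W_i$ to be a direct sum of copies of a single Tate structure, i.e. $|S_i|=1$, contradicting $|S_i|=2$. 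This finishes the argument.

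There is no serious obstacle. The two points deserving care are: checking that $\operatorname{HT}$ turns tensor products into Minkowski sums and is $\sigma$-stable — which is exactly where the axioms of a (fractional) Hodge structure are used, in particular the conjugation symmetry of the bigrading; and remembering to rule out the case in which the two points of $S_i$ are both on the diagonal, which is eliminated precisely because the total Minkowski sum is forced to be the off-diagonal set $\{(0,1),(1,0)\}$.
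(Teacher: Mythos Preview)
Your argument is correct and follows essentially the same route as the paper's proof: both observe that the Hodge-type set of a tensor product is the Minkowski sum of the factors' Hodge-type sets, and then deduce from $\operatorname{HT}(W)=\{(0,1),(1,0)\}$ that all but one $S_j$ is a diagonal singleton. The only cosmetic difference is that the paper projects to the first coordinate (using that $p_1+\cdots+p_k$ takes exactly the values $0$ and $1$, which forces all but one $\pi_1(S_j)$ to be a singleton, and then invokes $\sigma$-stability), whereas you apply the cardinality inequality $|A+B|\ge|A|+|B|-1$ directly in $\Q^2$ with the lexicographic order; your version is somewhat more explicit about the combinatorial step.
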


\begin{proof}
For any $i$ between $1$ and $k$, let $S_i\subset\Q^2$ denote the set of pairs of rational numbers $(p, q)$ such that $W_i^{p, q}\neq 0$. For any $(p, q)\in \Q^2$, $W^{p, q}$ is nonzero if and only if we may write 
$$(p, q)=(p_1+\ldots + p_k, q_1+\ldots+q_k),$$
where the pairs $(p_i, q_i)$ run through the elements of $S_i$. In particular, the sum $p_1+\ldots + p_k$ takes exactly the values $0$ and $1$. 

This implies that there exists a unique integer $i$ between $1$ and $k$ with the following property: for all $j\neq i$, there exists a unique rational number $a_j$ such that, for any $(p, q)\in S_j$, $p=a_j$. This implies that $W_j$ has type $\{(a_j, a_j)\}$, so that $W_j$ is a direct sum of copies of the fractional Hodge structure $\Q(a_j)$.

It follows immediately that $W_i$ is a fractional Hodge structure of type $\{(a_i+1, a_i),\, (a_i, a_i+1)\}.$
\end{proof}

\subsubsection{} Let $W$ be a Hodge structure of type $\{(0, 1),\,(1, 0)\}$, and let $G$ be the Mumford-Tate group of $W$. We assume that $G$ is reductive. By Proposition \ref{proposition:reductive-semisimple}, this is equivalent to $W$ being semisimple.

Consider connected normal subgroups $T$, $G_1, \ldots, G_r$ of $G$ that commute pairwise such that the following two conditions are satisfied: 
\begin{enumerate}
\item the multiplication map induces a central isogeny 
$$p : T\times G_1\times\ldots\times G_r\lra G;$$
\item for any $i$ between $1$ and $r$, there are no nontrivial characters $G_i\ra\Gm$.
\end{enumerate}
Note that the second condition above is satisfied if $G_i$ is simple or $G_i(\R)$ is compact.

Let 
$$h : \Sd\lra G$$
be the morphism corresponding to the Hodge structure $W$, and let 
$$\widetilde h : \widetilde \Sd\lra T\times G_1\times\ldots\times G_r$$
be the unique fractional lift of $h$ to $T\times G_1\times\ldots\times G_r$ provided by Proposition \ref{proposition:fractional-lift}. We write
$$\widetilde h=\widetilde h_0\,\widetilde h_1\ldots \widetilde h_r,$$
where $\widetilde h_0 : \widetilde \Sd\ra T$ and $\widetilde h_i : \widetilde \Sd\ra G_i$ for $1\leq i\leq r$ are the components of $\widetilde h$.

\begin{proposition}\label{proposition:unique-action}
Let $W'$ be a simple factor of the Hodge structure $W$. Then the following statements hold:
\begin{enumerate}[(i)]
\item there exists at most one integer $i$ in $\{1,\ldots, r\}$ such that $G_i$ acts nontrivially on $W'$. 
\end{enumerate}

Assume that $G_i$ acts nontrivially on $W'$.

\begin{enumerate}[(i)]
\setcounter{enumi}{1}
\item There exists a rational number $a$ such that the fractional Hodge structure on $W'$ induced by $\widetilde h_i$ and the action of $G_i$ on $W'$ is of type 
$$(\{(a+1, a), \, (a, a+1)\}.$$
\item If $G_i$ is simple, the natural surjection 
$$\pi : G=MT(W)\lra MT(W')$$
restricts to a central isogeny of $G_i$ onto its image, and it induces an isomorphism
$$\Gm\pi(G_i)\simeq  MT(W'),$$
where $\Gm$ is identified to the group of homotheties in $GL(W')\supset MT(W')$.
\end{enumerate}
\end{proposition}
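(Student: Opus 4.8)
The plan is to reduce everything to the tensor-decomposition statement of Proposition~\ref{proposition:tensor-abelian} applied to $W'$, by first producing a decomposition of $W'$ coming from the almost-direct product decomposition $p : T\times G_1\times\dots\times G_r\to G$.

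First I would show (i). The sub-Hodge structures of $W$ are the $G$-submodules, and $W'$ is a simple $G$-module. Pull back the action along the central isogeny $p$, so $W'$ becomes a simple module over $T\times G_1\times\dots\times G_r$. Since these factors commute, by the standard structure theory for modules over a product of groups (a simple module over $A\times B$ is an external tensor product of simple modules, at least after a finite extension of scalars — here one can argue over $\R$ or pass to $\C$ and descend, or invoke that $W'$ is absolutely simple as a Mumford--Tate representation up to the center), $W'_{\kb}$ is a tensor product $U_0\otimes U_1\otimes\dots\otimes U_r$ with $U_0$ a character of $T$ and $U_i$ a simple $G_i$-module. The point of hypothesis (2), that $G_i$ has no nontrivial characters, is that it forces $\dim U_i$ to control whether $G_i$ acts nontrivially: if $G_i$ acted through a one-dimensional $U_i$ that representation would be a character, hence trivial. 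Now $W$ has weight $1$, so $W'$ has weight $1$; feeding the decomposition $W'\simeq U_0\otimes\dots\otimes U_r$ into Proposition~\ref{proposition:tensor-abelian} (with the $U_j$ regarded as fractional Hodge structures via the components $\widetilde h_j$ — this is exactly where the fractional lift $\widetilde h$ is used), we learn that exactly one tensor factor carries the weight, and all the others are sums of copies of a single fractional Tate Hodge structure, hence one-dimensional up to multiplicity, hence trivial for the $G_i$ factors by the character observation. This gives (i): at most one $G_i$ acts nontrivially, and also pins down that $G_i$ acts through the unique "interesting" tensor factor. Statement (ii) is then precisely the last sentence of Proposition~\ref{proposition:tensor-abelian} applied to that factor: it has type $\{(a+1,a),(a,a+1)\}$ for some rational $a$.

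For (iii), assume $G_i$ is simple and acts nontrivially on $W'$. The map $\pi : G=MT(W)\to MT(W')$ is the restriction map coming from the $G$-action on $W'$ being the tautological one for its Mumford--Tate group. Restrict $\pi$ to $G_i$. Its kernel is a normal subgroup of the simple group $G_i$, hence finite or all of $G_i$; it is not all of $G_i$ since $G_i$ acts nontrivially on $W'$, so $\pi|_{G_i}$ has finite kernel, i.e.\ is a central isogeny onto its image $\pi(G_i)\subset MT(W')$. To identify $MT(W')$ with $\Gm\,\pi(G_i)$: by (i) and (ii), on $W'$ only $G_i$ among the $G_j$'s acts nontrivially and $T$ acts through a character, i.e.\ through scalars; so the image of $G$ in $GL(W')$ is generated by $\pi(G_i)$ and the homotheties, $MT(W')=\pi(G)=\Gm\,\pi(G_i)$. (One should check $\Gm\subset MT(W')$: the cocharacter $w_h$ lands in the center, and since $W'$ has weight $1$ it acts as a nontrivial scalar, so the homotheties are indeed in $MT(W')$.) That gives the claimed isomorphism $\Gm\,\pi(G_i)\simeq MT(W')$.

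The main obstacle I anticipate is the tensor-decomposition step for the simple module $W'$ over the commuting product $T\times G_1\times\dots\times G_r$: over a non-algebraically-closed field a simple module over a product need not split as an external tensor product of simple modules, so one must either argue after base change to $\C$ and then descend the resulting structure (keeping track of the $\Q$-rational Hodge data), or use that the relevant representations are already "split enough" because $G$ is a Mumford--Tate group and the central torus supplies all needed scalars. Handling this descent cleanly, and making sure Proposition~\ref{proposition:tensor-abelian} is applied to genuinely fractional Hodge structures (the factors $U_j$ carry fractional weights dictated by $\widetilde h_j$, not integral ones), is where the care is needed; everything after that is formal manipulation with reductive groups.
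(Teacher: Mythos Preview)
Your proposal is correct and follows essentially the same route as the paper: pull $W'$ back along the isogeny $p$, decompose the resulting simple module as a tensor product $W_0\otimes\cdots\otimes W_r$ over the factors $T, G_1,\ldots,G_r$, endow each factor with its fractional Hodge structure via $\widetilde h_j$, apply Proposition~\ref{proposition:tensor-abelian} to single out the unique factor of type $\{(a+1,a),(a,a+1)\}$, and use the no-character hypothesis on the $G_j$ to conclude (i) and (ii); part (iii) then follows exactly as you say. The paper asserts the tensor decomposition over $\Q$ without comment, so your flagged concern about descent from $\kb$ is, if anything, more scrupulous than the original argument.
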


\begin{proof}
Via the central isogeny $p$, we may consider $W'$ as an simple representation of $T\times G_1\times\ldots\times G_r.$ The irreducibility of $W'$ guarantees that we may write $W'$ as a tensor product
\begin{equation}\label{equation:decomposition-tensor}
W\simeq W_0\otimes\ldots\otimes W_r,
\end{equation}
where $W_0$ is an simple representation of $T$, and $W_i$ is an simple representation of $G_i$ for all $i$ between $1$ and $r$.

Via the fractional morphisms $\widetilde h_i$, the spaces $W_i$ are endowed with fractional Hodge structures, and the isomorphism \eqref{equation:decomposition-tensor} is an isomorphism of fractional Hodge structures.

Applying Proposition \ref{proposition:tensor-abelian} and the irreducibility of the $W_i$, we see that there exists a unique integer $i$ between $0$ and $r$ such that for all $j\neq i$, $W_j$ is isomorphic to the fractional Hodge structure $\Q(a_j)$ for some rational number $a_j$, and $W_i$ is a fractional Hodge structure of type $\{(a_i+1, a_i), (a_i, a_i+1)\}$ for some rational number $a_i$. 

Let $j\neq i$ be an integer between $1$ and $r$. Since $W_j$ is isomorphic to $\Q(a_j)$, $G_j$ acts on $W_j$ through a character. By assumption, this characteer is trivial, so that $a_j=0$ and $G_j$ acts trivially on $W_j$. This proves $(i)$ and $(ii)$.

With the notation of $(iii)$, since $G_i$ is simple, the restriction of the surjection $\pi$ to $G_i$ is a central isogeny. The surjection
$$\pi : G=MT(W)\lra MT(W')$$
maps all the $G_j$ to $1$ for $j\neq k$, and maps $T$ to $\mathbb G_m$ by \ref{proposition:tensor-abelian}. This proves $(iii)$.
\end{proof}

For any integer $i$ between $1$ and $r$, let $W_i$ denote the sum of those simple direct factors of $W$ on which $G_i$ acts nontrivially. For any $j\neq i$, the group $G_j$ acts trivially on $W_i$.

%
%

\begin{proposition}\label{proposition:faithful}
Let $i_1, \ldots, i_k$ be $k$ distinct integers between $1$ and $r$, and let $W_{i_1, \ldots, i_k}$ be the direct factor
$$W_{i_1, \ldots, i_k}=W_{i_1}\oplus\ldots\oplus W_{i_k}$$
of the Hodge structure $W$. Let
$$\pi : G=MT(W)\lra MT(W_{i_1, \ldots, i_k})$$
be the natural surjection. Then the restriction of $\pi$ to the subgroup $G_{i_1}\ldots G_{i_k}$ of $G$ is injective.
\end{proposition}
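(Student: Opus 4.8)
The plan is to reduce to the case analyzed in Proposition \ref{proposition:unique-action} by decomposing each relevant simple factor as a tensor product adapted to the isogeny $p : T\times G_1\times\dots\times G_r\to G$, and then to use the fact that a normal subgroup of $G$ which acts trivially on a piece of $W$ already acts trivially there through the structure of the decomposition. Concretely, write $N=G_{i_1}\cdots G_{i_k}$, a connected normal subgroup of $G$ lifting (under $p$) to a product of a subset of the $G_i$'s, and let $K=\ker(\pi|_N)$. This $K$ is normal in $G$ (being normal in $N$ and central, since $N$ acts on the various simple factors and $\pi$ kills the action on $W_{i_1,\dots,i_k}$ only), hence it suffices to show $K$ acts trivially on every simple factor $W'$ of $W$; then $K\subseteq\ker(G\to GL(W))=\{1\}$ by faithfulness of the Mumford--Tate representation.

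First I would fix a simple factor $W'$ of $W$. If $W'$ is one of the simple summands appearing in $W_{i_1}\oplus\dots\oplus W_{i_k}$, then by construction $\pi$ is injective on the image of $N$ restricted to $MT(W')$ — more precisely, $W'$ is a summand of some $W_{i_\ell}$, and $\pi$ factors through $MT(W')$, so $K$ acts trivially on $W'$ by definition of $K$. If instead $W'$ is not such a summand, then by the definition of the $W_i$ (the sum of simple factors on which $G_i$ acts nontrivially), \emph{none} of $G_{i_1},\dots,G_{i_k}$ acts nontrivially on $W'$: indeed Proposition \ref{proposition:unique-action}(i) says at most one $G_i$ acts nontrivially on $W'$, and if that index were some $i_\ell$ then $W'$ would be a summand of $W_{i_\ell}\subseteq W_{i_1,\dots,i_k}$, contrary to assumption. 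Hence $N=G_{i_1}\cdots G_{i_k}$ acts trivially on $W'$, and in particular $K$ does.

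Combining the two cases, $K$ acts trivially on every simple factor of $W$, hence on $W$ itself, so $K=\{1\}$ and $\pi|_N$ is injective. The main point requiring care is the verification that $K$ is normal in all of $G$ — equivalently, that decomposing $G$ via $p$ does not obstruct applying faithfulness of $MT(W)\to GL(W)$; here one uses that $T$ and the remaining $G_j$ ($j\notin\{i_1,\dots,i_k\}$) commute with $N$, so conjugation by $G$ preserves $N$ and preserves $\ker(\pi|_N)$ because $\pi$ is $G$-equivariant for the conjugation actions (it is a quotient map onto $MT(W_{i_1,\dots,i_k})$ with the induced Hodge morphism). The rest is bookkeeping with the tensor decomposition \eqref{equation:decomposition-tensor} already established in the proof of Proposition \ref{proposition:unique-action}.
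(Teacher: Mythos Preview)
Your argument is essentially the paper's: write $W=W_{i_1,\dots,i_k}\oplus W'$, observe that $N=G_{i_1}\cdots G_{i_k}$ acts trivially on $W'$ (since each simple summand of $W'$ lies outside every $W_{i_\ell}$), so $K=\ker(\pi|_N)$ acts trivially on all of $W$ and hence $K=\{1\}$ by faithfulness of $G\subset GL(W)$. The detour through normality of $K$ in $G$ is unnecessary (and your justification there is garbled): faithfulness of $G$ on $W$ already kills any subgroup acting trivially, normal or not, so you can simply drop that paragraph.
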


\begin{proof}
Since $W$ is semisimple, we may write 
$$W=W_{i_1, \ldots, i_k}\oplus W',$$
as a direct sum of Hodge structures. By assumption, the group $G_{i_1}\ldots G_{i_k}$ acts trivially on $W'$. Since the representation of $G_{i_1}\ldots G_{i_k}$ on $W$ is faithful by construction, this proves that $G_{i_1}\ldots G_{i_k}$ acts faithfully on $W_{i_1, \ldots, i_k}$, which proves the result.
\end{proof}

\subsection{Beginning of the proof}\label{subsection:beginning}

We keep the notation of Theorem \ref{theorem:torus-abelian} and start its proof.

\subsubsection{}\label{subsubsection:setting} After replacing $V$ with $V\otimes\Q(-c)$, we may assume that $c=0$, i.e., that $V$ is isomorphic to a subquotient of the Hodge structure 
$$W^{\otimes a}\otimes (W^\vee)^{\otimes b}.$$

Denote by $G$ (resp. $H$) the Mumford-Tate group of $W$ (resp. $V$), and by

$$h_G : \Sd\lra G_\R$$ 
and
$$h_H : \Sd\lra H_\R$$ 
the corresponding morphisms from the Deligne torus $\Sd$.

The group $G$ acts naturally on the space $W^{\otimes a}\otimes (W^{\vee})^{\otimes b}$. Together with $h_G$, this action defines the Hodge structure on $W^{\otimes a}\otimes (W^{\vee})^{\otimes b}$. Since $V$ is a subquotient of $W^{\otimes a}\otimes (W^{\vee})^{\otimes b}$, the group $G$ acts on $V$ in such a way that the action of $\Sd$ on $V_\R$ induced by $h_G$ defines the Hodge structure on $V$. As a consequence, the action of $G$ on $V$ defines a morphism of Mumford-Tate group $p : G\ra H$, and the diagram
\[
\xymatrix{
\Sd\ar[r]^{h_g}\ar[dr]^{h_H} & G_\R\ar[d]^{p_\R}\\
& H_\R
}
\]
is commutative. 

By construction, the image of $h_H(\C^*)$ in $H(\R)$ is Zariski-dense in $G$. As a consequence, $p$ is surjective.

\subsubsection{}\label{subsubsection:reductive}
Since $V$ is polarizable, its Mumford-Tate group $H$ is reductive. Let $U$ be the unipotent radical of $G$. Then $p(U)$ is a connected, unipotent soubgroup of $H$. It is normal since $p$ is surjective. This proves that $U$ is contained in the kernel of $p$ and that $p$ factors through the reductive quotient $G^{\rm{red}}$ of $G$. 

Consider the faithful representation $W^{\mathrm{ss}}$ of $G^{\rm{red}}$. It is semisimple of type $\{(0, 1), (1, 0)\}.$ Via the surjection $G^{\rm{red}}\ra H$, consider $V$ as a representation of $G^{\rm{red}}.$ By \cite[Chapter I, Proposition 3.1(a)]{DeligneMilneOgusShih}, the representation $V$ of $G^{\rm{red}}$ is isomorphic to a direct factor of $(W^{\mathrm{ss}})^{\otimes a'}\otimes (W^{ss, \vee})^{\otimes b'}$ for some nonnegative integers $a'$ and $b'$. In particular, the Hodge structure $V$ is isomorphic to a direct factor of $(W^{\mathrm{ss}})^{\otimes a'}\otimes (W^{\mathrm{ss}, \vee})^{\otimes b'}$.  

By the previous two paragraphs, we may replace $G$ by its reductive quotient $G^{\rm{red}}$ and $W$ by its semisimplification $W^{\mathrm{ss}}$. From now on, we assume that $G$ is reductive and $W$ is semisimple.

\subsubsection{} We apply the results of \ref{subsection:MT-ab}. Let $Z$ be the identity component of the center of $G$ and let $G^{\der}$ be the derived subgroup of $G$. The multiplication map induces a central isogeny
$$Z\times G^{\der}\lra G.$$
Additionnally, the group $G^{\der}$ is semisimple.

Let $Z'$ be the identity component of the center of $H$. The surjection $G\ra H$ induces a surjection $Z\ra Z'.$ As a consequence, we may find connected subgroups $T$ and $G_1$ of $Z$ such that the multiplication map 
$$T\times G_1\lra Z$$
is an isogeny, and the composition 
$$G_1\lra Z\lra Z'/w_{h_H}(\Gm)$$
is an isogeny.

By \cite[Proposition 1.1.14]{Deligne79}, the group 
$$(Z'/w_{h_H}(\Gm))(\R)$$
is compact. As a consequence, the group $G_1(\R)$ is compact.

Let $(G_i)_{2\leq i\leq r}$ denote the minimal connected normal subgroups of $G^{\mathrm{der}}$. Then the $G_i$ are simple normal subgroups of $G$ that commute pairwise, and the multiplication map induces a central isogeny
$$q : T\times G_1\ldots\times G_r\lra G.$$

After possibly reordering the groups $G_i$, we may find an integer $k\geq 1$ such that, for any $i$ between $1$ and $k$, the morphism 
$$p_{|G_i} : G_i\lra G^{\mathrm{der}}$$
is an isogeny onto its image, and, for any $i$ between $k+1$ and $r$, $p$ maps $G_i$ to the identity element of $G$. In particular, the restriction of $p\circ q$ to $G_1\times\ldots\times G_k$ is surjective onto $H/w_{h_H}(\Gm).$


\begin{lemma}\label{lemma:Cartan-involution}
For any integer $j$ between $2$ and $k$, conjugation by $h_G(i)$ on $G_\R$ induces a Cartan involution of $G_{j, \R}$, namely, the group
$$\{g\in G_j(\C),\, g=h(i)\overline g h(i)^{-1}\}$$
is compact.
\end{lemma}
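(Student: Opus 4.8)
The plan is to deduce the Cartan involution property from the standard fact (due to Deligne) that the Mumford–Tate group of a polarizable Hodge structure admits $h(i)$ as a Cartan involution on the adjoint group of its derived subgroup. Concretely, $V$ is polarizable, hence its Mumford–Tate group $H$ is reductive and $\mathrm{ad}\,h_H(i)$ is a Cartan involution of the adjoint group $H^{\mathrm{der,ad}}_\R$; this is \cite[Proposition 1.1.14 and \S2.8]{Deligne79} (equivalently, the Hodge decomposition being polarized forces the compactness of the associated real form). The task is then to transport this along the isogeny $G_1\times\ldots\times G_k\lra H/w_{h_H}(\Gm)$ to conclude the compactness of the twisted form of each $G_{j,\R}$ for $j\geq 2$.

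First I would record that, since $q : T\times G_1\times\ldots\times G_r\lra G$ is a central isogeny and $h_G$ lifts (fractionally, by Proposition \ref{proposition:fractional-lift}) to a morphism $\widetilde h$ with components $\widetilde h_i$, the element $h_G(i)$ acts on each $G_{j,\R}$ by conjugation through $\widetilde h_j(i)$ — the other components act trivially on $G_j$ because the $G_i$ commute pairwise and $q$ is central, so conjugation by $h_G(i)$ restricted to $G_{j,\R}$ equals $\mathrm{Int}(\widetilde h_j(i))$ up to a central (hence invisible at the level of $\mathrm{Int}$) factor. So the involution whose compactness we must establish is $\theta_j := \mathrm{Int}(h_G(i))|_{G_{j,\R}}$, and it suffices to check that $\theta_j$ is a Cartan involution of the semisimple group $G_{j,\R}$.

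Next, for $j$ between $2$ and $k$, the restriction $p|_{G_j}$ is an isogeny onto its image, a simple normal subgroup of $H^{\mathrm{der}}$; moreover $p$ intertwines $h_G$ and $h_H$, so it carries $\theta_j$ to $\mathrm{Int}(h_H(i))$ restricted to $p(G_j)_\R$. Since a Cartan involution of $H^{\mathrm{der}}_\R$ (which exists because $H$ is the Mumford–Tate group of a polarizable Hodge structure) restricts to a Cartan involution on every normal subgroup, and since an isogeny of semisimple real groups reflects the Cartan property (compactness of the fixed form is preserved and detected under isogeny), $\theta_j$ is a Cartan involution of $G_{j,\R}$; unwinding the definition, the group $\{g\in G_j(\C) : g = h(i)\overline g h(i)^{-1}\}$ is exactly the real points of the associated compact form, hence compact. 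I would also remark why $j\geq 2$ is needed: for $j=1$, $G_1$ sits inside the center $Z$ and the relevant statement is instead the compactness of $G_1(\R)$ itself, already established, while $h_G(i)$ need not act as a Cartan involution there because the central cocharacter $w_{h_H}$ has been quotiented out.

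The main obstacle is the bookkeeping around the central isogeny and the fractional lift: one must argue cleanly that conjugation by $h_G(i)$ on $G_{j,\R}$ genuinely coincides with conjugation by the $j$-th component $\widetilde h_j(i)$ — i.e. that passing to $\mathrm{Int}$ kills both the central factor $T$ and the kernel of $q$ — and then that the Cartan property descends along $p|_{G_j}$ despite $p$ only being an isogeny onto its image rather than an isomorphism. Both are standard facts about real reductive groups (Cartan involutions pass to and from isogenous groups and to normal factors, cf. the structure theory in \cite[\S II]{DeligneMilneOgusShih} or Deligne's \cite{Deligne79}), so the proof is essentially an assembly of known statements; I would keep it short and cite Deligne for the input that $h_H(i)$ induces a Cartan involution on $H^{\mathrm{der,ad}}_\R$.
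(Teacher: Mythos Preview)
Your proposal is correct and follows essentially the same route as the paper: invoke Deligne's result that conjugation by $h_H(i)$ is a Cartan involution of $H^{\mathrm{ad}}_\R$ (since $V$ is polarizable), observe that $p$ restricted to the simple factor $G_j$ is an isogeny onto a closed subgroup and intertwines $\mathrm{Int}(h_G(i))$ with $\mathrm{Int}(h_H(i))$, and conclude compactness of the twisted form by pulling back along this isogeny. The one simplification the paper makes over your write-up is that it bypasses the discussion of the fractional lift $\widetilde h_j$ entirely: since $G_j$ is normal in $G$, conjugation by $h_G(i)$ already preserves $G_{j,\R}$, so there is no need to decompose $h_G(i)$ into components before passing to $\mathrm{Int}$.
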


\begin{proof}
Let $C$ (resp. $C'$) denote the involution of $G_\R$ (resp. $H_\R$) given by conjugation by $h_G(i)$ (resp. $h_H(i)$). Since $G_j$ is a normal subgroup of $G$, $G_{j, \R}$ is stable under $C$. 

By construction, the restriction of $p\circ q$ to $G_j$ defines an isogeny of $G_j$ onto a closed subgroup of $H^{\mathrm{ad}}$. In particular, it defines an isogeny from the group
$$G_j^{(C)}(\R) :=\{g\in G_j(\C),\, g=C(\overline g)\}$$
onto a closed subgroup of 
$$H^{\mathrm{ad}, (C')}(\R) :=\{h\in H^{\mathrm{ad}}(\C),\, h=C'(\overline h)\}.$$

Since $V$ is polarizable, \cite[Proposition 1.1.14]{Deligne79} shows that $C'$ is a Cartan involution of $H^{\mathrm{ad}}_\R$. This proves that $G_j^{(C)}(\R)$ is compact.
\end{proof}

\begin{proposition}\label{proposition:irreducible-polarizable}
Let $j$ be an integer between $1$ and $k$, and let $W'$ be an simple direct factor of the Hodge structure $W$ such that $G_j$ acts nontrivially on $W'$. Then $W'$ is polarizable.
\end{proposition}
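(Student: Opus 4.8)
The plan is to use the Cartan‐involution criterion for polarizability (Deligne's criterion) applied to the Mumford--Tate group $MT(W')$ of the simple Hodge structure $W'$. Recall that a simple Hodge structure with reductive Mumford--Tate group $M$ is polarizable if and only if conjugation by $h(i)$ is a Cartan involution of $M^{\mathrm{ad}}_\R$ (equivalently, of the image of $M^{\mathrm{der}}$ in $\mathrm{Aut}$). So the goal is to show that the involution $C$ of conjugation by $h_G(i)$ induces a Cartan involution on the adjoint group of $MT(W')$.

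First I would invoke Proposition \ref{proposition:unique-action}(iii): since $G_j$ is simple and acts nontrivially on the simple factor $W'$, the natural surjection $\pi : G = MT(W) \to MT(W')$ restricts to a central isogeny of $G_j$ onto its image, and $MT(W') = \Gm \cdot \pi(G_j)$, where $\Gm$ is the homotheties. In particular the derived (equivalently adjoint) group of $MT(W')$ is a quotient of $G_j$ by a central subgroup, so $MT(W')^{\mathrm{ad}}$ is a quotient of $G_{j}^{\mathrm{ad}} = G_j / (\text{center})$. Since $G_j$ is normal in $G$, it is stable under $C$, and $C$ descends to an involution of $G_j^{\mathrm{ad}}$ and hence of $MT(W')^{\mathrm{ad}}$; moreover this descended involution is precisely conjugation by $h_{W'}(i)$, where $h_{W'}$ is the morphism defining the Hodge structure on $W'$, because $h_{W'} = \pi \circ h_G$ (after passing to the appropriate fractional lift, which does not affect the adjoint group). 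So it suffices to show $C$ induces a Cartan involution on $G_{j, \R}^{\mathrm{ad}}$.

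For $j$ between $2$ and $k$ this is exactly the content of Lemma \ref{lemma:Cartan-involution}: conjugation by $h_G(i)$ is a Cartan involution of $G_{j,\R}$, hence of $G_{j,\R}^{\mathrm{ad}}$, hence of $MT(W')^{\mathrm{ad}}_\R$, and polarizability of $W'$ follows. For $j = 1$ the argument is even simpler: by construction $G_1(\R)$ is compact (it maps isogenously onto the compact group $(Z'/w_{h_H}(\Gm))(\R)$), so a fortiori $G_1^{\mathrm{ad}}(\R)$ is compact and the identity — hence any inner — involution is a Cartan involution. In both cases one concludes that $MT(W')^{\mathrm{ad}}_\R$ carries $C$ as a Cartan involution.

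The remaining point, and the step I expect to require the most care, is checking that Deligne's criterion applies in the generality needed here: $W'$ is a priori only a \emph{fractional} Hodge structure of type $\{(a+1,a),(a,a+1)\}$ (by Proposition \ref{proposition:unique-action}(ii)), not necessarily of weight $1$ on the nose, so I would either twist $W'$ by a (fractional) Tate structure to normalize the weight — which changes neither $MT(W')^{\mathrm{ad}}$ nor polarizability — or invoke the version of the Cartan‐involution criterion phrased directly in terms of the adjoint group, which is insensitive to such twists (see \cite[Proposition 1.1.14]{Deligne79}). Once the weight is normalized, a simple Hodge structure of type $\{(1,0),(0,1)\}$ whose adjoint Mumford--Tate group has a Cartan involution given by $h(i)$ is polarizable, completing the proof.
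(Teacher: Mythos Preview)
Your approach is essentially the same as the paper's: use Proposition \ref{proposition:unique-action} to identify $MT(W')$ with $\Gm\cdot\pi(G_j)$, then invoke Lemma \ref{lemma:Cartan-involution} for $j\geq 2$ and the compactness of $G_1(\R)$ for $j=1$ to conclude that conjugation by $h'(i)$ is a Cartan involution of $MT(W')/w_{h'}(\Gm)$, whence $W'$ is polarizable by Deligne's criterion.

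One point of unnecessary concern in your final paragraph: $W'$ is a direct factor of the genuine weight-$1$ Hodge structure $W$, so $W'$ is itself a genuine Hodge structure of type $\{(0,1),(1,0)\}$. The fractional structure of type $\{(a+1,a),(a,a+1)\}$ in Proposition \ref{proposition:unique-action}(ii) is a \emph{different} structure on the same vector space, namely the one induced by $\widetilde h_j$ alone; it is not the Hodge structure you are trying to polarize. So no twisting or fractional normalization is needed, and the criterion applies directly. (Also note that Proposition \ref{proposition:unique-action}(iii) is stated under the hypothesis that $G_j$ is simple, which fails for $j=1$; but since $G_1$ is an anisotropic torus the conclusion $MT(W')=\Gm\cdot\pi(G_1)$ still follows from the proof of that proposition, and in any case compactness of $G_1(\R)$ makes the Cartan-involution check immediate, as you observe.)
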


\begin{proof}
Let 
$$\pi : G=MT(W)\lra MT(W')$$
denote the natural surjection. Let 
$$h' : \Sd\lra MT(W')_\R$$
be the morphism defining the Hodge structure on $W'$, so that $h'=\pi\circ h.$ 

Proposition \ref{proposition:unique-action} shows that $MT(W')$ is equal to $\Gm\pi(G_j)$, where $\Gm$ is identified to the group of homotheties of $W'$. Together with Lemma \ref{lemma:Cartan-involution} in case $j\geq 2$ and by construction if $j=1$, this implies that $h'(i)$ is a Cartan involution of $MT(W')/w_{h'}(\Gm)$, so that $W'$ is polarizable, see \cite[Proposition 2.11]{Deligne72} or \cite[Proposition 3.2]{Milne20}.
\end{proof}

For any integer $i$ between $1$ and $r$, let $W_j$ denote the sum of those simple direct factors of the Hodge structure $W$ on which $G_j$ acts nontrivially. Let $W_p$ be the Hodge structure 
$$W_p=\bigoplus_{j=1}^k W_j.$$

\begin{proposition}\label{proposition:polarization-exists}
The Hodge structure $W_p$ is polarizable.
\end{proposition}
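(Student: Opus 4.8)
The plan is to realize $W_p$ as a direct summand of $W$ (since $W$ is semisimple, $W_p = \bigoplus_{j=1}^k W_j$ is a sub-Hodge structure which is a direct factor) and then to build a polarization on $W_p$ out of polarizations on its simple constituents. By Proposition \ref{proposition:irreducible-polarizable}, every simple direct factor $W'$ of $W$ on which some $G_j$ with $1 \leq j \leq k$ acts nontrivially is polarizable; by definition of $W_j$ and of $W_p$, every simple summand of $W_p$ is of this form. So each simple summand of $W_p$ admits a polarization. The issue is that a direct sum of polarizable Hodge structures is always polarizable — this is standard — so in fact $W_p$ is polarizable as soon as each of its simple summands is.

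Concretely, I would argue as follows. First, decompose $W_p$ into isotypic components $W_p \simeq \bigoplus_\alpha W'_\alpha \otimes M_\alpha$, where the $W'_\alpha$ are pairwise non-isomorphic simple Hodge structures and $M_\alpha$ is the (finite-dimensional $\Q$-vector space) multiplicity space, with trivial Hodge structure placed in the appropriate weight — more simply, just write $W_p \simeq \bigoplus_i W'_i$ as a finite direct sum of simple Hodge structures, each polarizable by Proposition \ref{proposition:irreducible-polarizable}, and note they all have the same weight $1$. Choose a polarization $\phi_i : W'_i \otimes W'_i \to \Q(-1)$ for each $i$. Then $\phi := \bigoplus_i \phi_i$, extended by zero on the cross terms $W'_i \otimes W'_j$ for $i \neq j$, is a morphism of Hodge structures $W_p \otimes W_p \to \Q(-1)$, and the associated real bilinear form $(x,y) \mapsto \phi_\R(x, h(i)y)$ is block-diagonal with positive-definite blocks, hence positive definite. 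Thus $\phi$ is a polarization of $W_p$.

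The only point requiring a little care is the cross-term vanishing: if $W'_i$ and $W'_j$ are non-isomorphic simple Hodge structures, then $\Hom(W'_i \otimes W'_j, \Q(-1)) = \Hom(W'_i, W_j^{\prime\vee}(-1)) = 0$ since $W_j^{\prime\vee}(-1)$ is simple and not isomorphic to $W'_i$; and if $W'_i \simeq W'_j$ but they are distinct summands, one can still extend by zero on that pair without destroying positivity, since the diagonal blocks remain positive definite and the resulting quadratic form is a direct sum. Alternatively, and perhaps cleanest for the writeup, group the isomorphic simple summands together into isotypic pieces and polarize each isotypic piece using a positive-definite form on the multiplicity space; a direct sum of these over the distinct isotypic pieces is then automatically a polarization because there are no nonzero Hodge morphisms between distinct isotypic pieces into $\Q(-1)$.

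I do not anticipate a genuine obstacle here: the real content has already been extracted in Proposition \ref{proposition:irreducible-polarizable} (which is where Lemma \ref{lemma:Cartan-involution} and Deligne's criterion were used), and what remains is the elementary and well-known fact that a finite direct sum of polarizable pure Hodge structures of the same weight is polarizable. The mild bookkeeping subtlety is simply making sure the off-diagonal terms can be taken to vanish, which follows from Schur's lemma for Hodge structures (i.e. semisimplicity of the category of polarizable Hodge structures together with $\Hom$ being zero between non-isomorphic simples).
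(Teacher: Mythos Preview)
Your approach is correct and is exactly what the paper does: the paper's proof is the single line ``This is a formal consequence of Proposition \ref{proposition:irreducible-polarizable},'' and you have simply written out that formal consequence (direct sums of polarizable pure Hodge structures of the same weight are polarizable). Your digression on forcing the cross-terms to vanish via Schur's lemma is unnecessary---extending by zero is already a morphism of Hodge structures and the block-diagonal positivity argument suffices---but it does no harm to the argument.
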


\begin{proof}
This is a formal consequence of Proposition \ref{proposition:irreducible-polarizable}.
\end{proof}

Note that the surjections
$$G=MT(W)\lra MT(W_p)$$
and
$$G\lra H=MT(V)$$
endow both $V$ and $W_p$ with actions of $G$.
%
%

\subsection{Proof of Theorem \ref{theorem:torus-abelian}}

\begin{proposition}\label{proposition:injection-equivariant}
There exists nonnegative integers $a'$ and $b'$, and a linear injection 
$$\phi : V\hlra W_p^{\otimes a'}\otimes (W_p^\vee)^{\otimes b'}$$
that is equivariant with respect to the action of the subgroup $G_1\ldots G_k$ of $G$.
\end{proposition}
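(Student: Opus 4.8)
Write $K:=G_1\cdots G_k$ for the subgroup of $G$ occurring in the statement. Applying Proposition \ref{proposition:faithful} with $\{i_1,\ldots,i_k\}=\{1,\ldots,k\}$, so that $W_{i_1,\ldots,i_k}$ is exactly $W_p$, the natural surjection $G=MT(W)\to MT(W_p)$ is injective on $K$; hence $W_p$ is a \emph{faithful} representation of $K$. Moreover $K$ is connected and reductive (it is the image of the product of the torus $G_1$ and the semisimple groups $G_2,\ldots,G_k$), and, having neither a nontrivial simple quotient nor any nontrivial central character of infinite order issue of the relevant kind, it admits no nontrivial character $K\to\Gm$. Finally, via $G\to H=MT(V)$ the space $V$ is a representation of $K$. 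The plan is to first realize $V$ inside a direct \emph{sum} of tensor constructions on $W_p$, and then collapse that sum to a single term.

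For the first step one can invoke \cite[Chapter I, Proposition 3.1(a)]{DeligneMilneOgusShih} for the reductive group $K$ and its faithful representation $W_p$: $V$ is then isomorphic, as a $K$-representation, to a direct factor of a finite sum $\bigoplus_i W_p^{\otimes a_i}\otimes (W_p^\vee)^{\otimes b_i}$. Alternatively, and keeping better track of the Hodge structures, one may start from the embedding of $V$ as a direct factor of $W^{\otimes a'}\otimes (W^\vee)^{\otimes b'}$ produced in \ref{subsubsection:reductive}, decompose $W=W_p\oplus W''$ as Hodge structures (with $W''=W/W_p$), observe that $K$ acts trivially on $W''$ by the very definition of $W_p$, and expand the tensor product using $W''|_K\simeq \mathbf 1^{\dim W''}$; this again exhibits $V|_K$ as a direct factor of a direct sum of $K$-representations of the form $W_p^{\otimes s}\otimes (W_p^\vee)^{\otimes t}$ with $0\le s\le a'$, $0\le t\le b'$. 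Now collapse the sum using three facts about the $K$-representation $W_p$: (i) since $K$ is reductive, $\mathbf 1$ is a direct summand of $\Hom(W_p,W_p)=W_p\otimes W_p^\vee$ (the line spanned by the identity), so $W_p^{\otimes a}\otimes (W_p^\vee)^{\otimes b}$ is a direct summand of $W_p^{\otimes (a+t)}\otimes (W_p^\vee)^{\otimes (b+t)}$ for all $t\ge 0$; (ii) a polarization of $W_p$ is a $K$-equivariant isomorphism $W_p\simeq W_p^\vee\otimes\Q(-1)$, and since $\Q(-1)|_K$ is the trivial representation ($K$ having no nontrivial characters) we get $W_p\simeq W_p^\vee$ as $K$-representations — so that $W_p^{\otimes a_i}\otimes (W_p^\vee)^{\otimes b_i}\simeq W_p^{\otimes (a_i+b_i)}$ — and $\mathbf 1$ is a direct summand of $W_p^{\otimes 2}$; (iii) for $r$ large enough, arbitrarily many copies of $\mathbf 1$ occur as a direct summand of $W_p^{\otimes 2r}$, which lets one absorb multiplicities. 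Granting that the exponents $a_i+b_i$ can be taken with a common parity, one raises them all to a single large $N$ via (i)--(ii), absorbs the multiplicities via (iii), and produces a $K$-equivariant injection $V\hookrightarrow W_p^{\otimes N}\simeq W_p^{\otimes a'}\otimes (W_p^\vee)^{\otimes b'}$.

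The main obstacle is precisely the parity (equivalently, homogeneity) input: one must show that the representations $W_p^{\otimes a_i}\otimes (W_p^\vee)^{\otimes b_i}$ that actually contribute a constituent of $V$ all have $a_i-b_i$ of one fixed parity. This is where I expect one must exploit the fine structure of $W_p$ from Proposition \ref{proposition:unique-action} — the description of how each simple factor $G_\nu$ acts on the simple constituents of $W_p$, together with the fact that $K$ is normal in $G$ (so $G$-conjugate $K$-constituents have the same central/$\pi_1$-degree) and the purity of $V$ and of $W_p$. Concretely, the point is to exhibit an order-two element $z\in K$ acting on $W_p$ as $-\mathrm{id}$, which then acts on $W_p^{\otimes a}\otimes (W_p^\vee)^{\otimes b}$ by $(-1)^{a-b}$ and forces the parity; and in the cases where no such $z$ exists, $W_p$ is non-homogeneous for the central grading of $K$ in a way that produces a $K$-invariant tensor of odd degree in its tensor algebra, making the parity constraint vacuous. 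In either case the exponents get equalized and the argument concludes; making this dichotomy precise using Proposition \ref{proposition:unique-action} is the technical heart of the proof.
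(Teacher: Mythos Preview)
Your first paragraph \emph{is} the paper's entire proof: Proposition~\ref{proposition:faithful} shows that $W_p$ is a faithful representation of the reductive group $G_1\cdots G_k$, and then \cite[Chapter~I, Proposition~3.1(a)]{DeligneMilneOgusShih} is invoked. The paper says nothing more than this.

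Everything from your second paragraph on is an attempt to justify the passage from the direct sum $\bigoplus_i W_p^{\otimes a_i}\otimes(W_p^\vee)^{\otimes b_i}$ that the cited result literally produces to the single tensor in the statement. The paper does not engage with this point at all. Your concern is not frivolous --- for a general reductive group with no characters the single-tensor statement can fail (take $K=SL_2$, $W_p$ the standard representation, $V=\mathbf 1\oplus W_p$) --- but your proposed parity dichotomy is only a sketch, and as stated it does not close the gap: even if a central $z\in K$ acts by $-1$ on $W_p$, nothing you have written forces the simple $K$-constituents of $V$ to lie in a common $z$-eigenspace, since $z$ need not act by a scalar on $W$ and hence the original embedding $V\hookrightarrow W^{\otimes a}\otimes(W^\vee)^{\otimes b}$ gives no immediate constraint.

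If you want to repair this cleanly, it is easier to do so \emph{downstream} rather than inside Proposition~\ref{proposition:injection-equivariant}. Run the proof of Theorem~\ref{theorem:torus-abelian} with a $K$-equivariant injection into a direct sum; that argument upgrades $\phi$ (up to Tate twist) to a morphism of Hodge structures, and then purity of $V$ forces the image to land in the summands with a fixed value of $a_i-b_i$. Those summands can then be collapsed to a single $W_p^{\otimes a'}\otimes(W_p^\vee)^{\otimes b'}$ using the embedding $\mathbf 1\hookrightarrow W_p\otimes W_p^\vee$ together with your multiplicity trick~(iii). In other words, the Hodge-theoretic weight grading --- which is invisible to $K$ alone --- is what resolves your parity obstacle, not an internal argument about the representation theory of $K$.
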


\begin{proof}
Proposition \ref{proposition:faithful} shows that $W_p$ is a faithful representation of the reductive group $G_1\ldots G_k$. The result follows from \cite[Chapter I, Proposition 3.1(a)]{DeligneMilneOgusShih}.
\end{proof}

\begin{proof}[Proof of Theorem \ref{theorem:torus-abelian}]
To prove Theorem \ref{theorem:torus-abelian}, it suffices to show that there exists an integer $c'$ such that the injection $\phi$ of Proposition \ref{proposition:injection-equivariant} induces an injection of Hodge structures
$$\psi : V\hlra W_p^{\otimes a'}\otimes (W_p^\vee)^{\otimes b'}\otimes\Q(c').$$

As both $V$ and $W_p^{\otimes a'}\otimes (W_p^\vee)^{\otimes b'}$ are pure Hodge structures, this is in turn equivalent to proving that for any simple sub-Hodge structure $V'$ of $V$, there exists an integer $c'$ such that $\phi$ induces an injection of Hodge structures 
$$V'\lra W_p^{\otimes a'}\otimes (W_p^\vee)^{\otimes b'}\otimes\Q(c').$$
Indeed, if this holds, then $c'$ is independent of $V'$ by purity. We pick such a $V'$. 

Let $H'$ be the image of the subgroup $G_1\ldots G_r$ in $H$. Then the multiplication map induces an isogeny
$$w_h(\Gm)\times H'\lra H,$$
and we may lift $h_H : \Sd\ra H_\R$ to a fractional morphism
$$\widetilde h' : \Sd\lra w_h(\Gm)_\R\times H'_\R.$$
We denote by $\widetilde h_{H'} : \Sd\ra H'_\R$ the component of $\widetilde h'$ mapping to $H'_\R$.

Since $V'$ is simple, the group $w_{h_H}(\Gm)$ acts on $V'$ by a homothety. This proves that the Hodge structure on $V'$ coincides up to a twist by some $\Q(c')$ with the fractional Hodge structure on $V'$ induced by $\widetilde h_{H'}.$

Similarly, consider the lift of $h_G$ to a fractional morphism 
$$\widetilde h : \Sd\lra (Z\times G_1\times\ldots\times G_r)_\R,$$
as well as its component $\widetilde h_{1,\ldots, k} : \Sd\lra (G_1\times\ldots\times G_k)_\R.$ 

By construction, $Z$ acts through a character on $W_p$, and $G_{k+1}, \ldots, G_r$ all act trivially on $W_p$. Consequently, the same argument as above shows that if $W'_p$ is an simple sub-Hodge structure of $W_p$, the Hodge structure on $W'_p$ coincides up to a twist by some $\Q(d')$ with the fractional Hodge structure on $W'_p$ induced by $\widetilde h_{1,\ldots, k}.$

Finally, the previous paragraph implies that if $W''$ is an simple sub-Hodge structure of $W_p^{\otimes a'}\otimes (W_p^\vee)^{\otimes b'}$,  the Hodge structure on $W''$ coincides up to a twist by some $\Q(e')$ with the fractional Hodge structure on $W''$ induced by $\widetilde h_{1,\ldots, r}.$

The equivariance of $\phi$ with respect to the action of $G_1\ldots G_k$ proves that $\phi$ defines a morphism between the fractional Hodge structures on $V$ and $W_p^{\otimes a'}\otimes (W_p^\vee)^{\otimes b'}$ defined by $\widetilde h_{H'}$ and $\widetilde h_{1,\ldots, k}$ respectively. This finishes the proof of Theorem \ref{theorem:torus-abelian}.
\end{proof}

\section{Universality of the Kuga-Satake construction}

\subsection{The Kuga-Satake construction}

\subsubsection{}

We recall briefly the Kuga-Satake construction in the context of Hodge structures that are not necessarily polarizable. We refer to \cite{Deligne72} and \cite[Chapter 4]{HuybrechtsK3book} for more details, see in particular \cite[Chapter 4, Remark 2.3]{HuybrechtsK3book} for the non polarized case.

\begin{definition}
A \emph{Hodge structure of $K3$ type} is a pure Hodge structure $V$ of weight $0$, of type $\{(1, -1), (0,0), (-1, 1)\}$ with $\dim V^{1, -1}=1$.

A \emph{Beauville-Bogomolov} form on $V$ is a nondegenerate quadratic form $q$ on $V$ that induces an morphism of Hodge structures
$$q : V\otimes V\lra \Q$$
and is definite positive on the real part of $V^{1,-1}\oplus V^{-1, 1}.$
\end{definition}

Note that a Beauville-Bogomolov form does not define a polarization on $V$ in general. If $X$ is a compact hyperkähler manifold, the Hodge structure $H^2(X, \Q)$ is of $K3$ type, endowed with a natural Beauville-Bogomolov form, see \cite{Beauville83}.

Let $V$ be a Hodge structure of $K3$ type endowed with a Beauville-Bogomolov form. The natural morphism 
$$h : \Sd\lra GL(V)_\R$$
defining the Hodge structure on $V$ factors through the special orthogonal group $SO(V)_\R$.

Let $C(V)$ denote the Clifford algebra of $V$, $C^+(V)$ its even part. We regard $V$ as a subspace of $C(V)$. Let $\mathrm{CSpin}(V)$ denote the Clifford group of $V$, defined as the algebraic groups of invertible elements $g$ of $C^+(V)$ such that 
$$gVg^{-1}=V.$$
Conjugation by elements of $\mathrm{CSpin}(V)$ defines a surjection 
$$\mathrm{CSpin}(V)\lra SO(V)$$
with kernel reduced to the scalars. The computation of \cite[Chapter 4, 2.1]{HuybrechtsK3book} shows that the morphism $h$ lifts uniquely to a morphism
$$h' : \Sd\lra \mathrm{CSpin}(V)_\R.$$

Consider the representation of the group $\mathrm{CSpin}(V)$ on $C^+(V)$ by multiplication on the left. By \cite[Proposition 4.5]{Deligne72}, the Hodge structure induced by $h'$ and this representation on $C^{+}(V)$ has type $\{(0,1), (1, 0)\}.$ This is the \emph{Kuga-Satake Hodge structure} associated to $(V, q)$. It is known to be polarizable if $q$ defines a polarization on $V$. We denote it by $H_{KS}$.

\subsubsection{}

Let $n$ be the dimension of $V$. The dimension of the algebra $C^+(V)$ is $2^{n-1}$. By e.g. \cite[3.4]{Deligne72}, we may understand the representation of $\mathrm{CSpin}(V_\C)$ on $C^+(V_\C)$ by multiplication on the left as follows.

If $n$ is odd, then $C^+(V_\C)$ is a direct sum of $2^{(n-1)/2}$ copies of the spin representation, which is irreducible of dimension $2^{(n-1)/2}$.

If $n$ is even, then $C^+(V_\C)$ is a direct sum of $2^{n/2-1}$ copies of the direct sum of the two half-spin representations, which are both irreducible of dimension $2^{n/2-1}$.

Assume that the Mumford-Tate group of the Hodge structure $V$ is the full special orthogonal group $SO(V)$. Considering the commutative diagram 
\[
\xymatrix{
\Sd\ar[r]^-{h'}\ar[dr]^h & \mathrm{CSpin}(V)_\R\ar[d]\\
& SO(V)_\R
}
\]
in which the vertical map is surjective shows that the Mumford-Tate group of $H_{KS}$ is the group $\mathrm{CSpin}(V)$. In particular, it is reductive, so that the Hodge structure $H_{KS}$ is semisimple. We want to give a description of the simple factors of $H_{KS}$. The simple factors of $H_{KS}$ are exactly the simple factors of the representation of $\mathrm{Cspin}(V)$ on $H_{KS}$. 

\begin{lemma}\label{lemma:simple-basic}
Let $H$ and $H'$ be two nonisomorphic simple factors of the Hodge structure $H_{KS}$. 

Then $n$ is even, there exists a positive integer $r$ such that the representation of $\mathrm{CSpin}(V_\C)$ on $H_\C$ is a direct sum of copies of one of the half-spin representations, and the representation of $\mathrm{CSpin}(V_\C)$ on $H'_\C$ is a direct sum of copies of the other half-spin representation. \end{lemma}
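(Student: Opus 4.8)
The plan is to translate the statement into representation theory and then compare decompositions over $\Q$ and over $\C$. Under the standing assumption $MT(V)=SO(V)$, the Mumford-Tate group of $H_{KS}$ is $\mathrm{CSpin}(V)$, this group is reductive, and $H_{KS}$ is semisimple; as recalled just above, the simple factors of the Hodge structure $H_{KS}$ are exactly the simple $\Q$-subrepresentations of $\mathrm{CSpin}(V)$ acting on $C^+(V)$ by left multiplication. So I would phrase everything in terms of the $\Q$-representation $U:=C^+(V)$ of $G:=\mathrm{CSpin}(V)$, for which the decomposition of $U_\C$ is known explicitly from the discussion above: a sum of copies of the spin representation if $n$ is odd, and a sum of copies of the two half-spin representations $S^+$, $S^-$ if $n$ is even, these being irreducible and pairwise nonisomorphic.

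First I would record the following standard descent fact: if $G$ is a reductive group over $\Q$ and $U$ is a finite-dimensional $\Q$-representation of $G$, then the $\Q$-isotypic components of $U$ correspond bijectively to the orbits of $\mathrm{Gal}(\overline{\Q}/\Q)$ on the set of isomorphism classes of $\overline{\Q}$-irreducible constituents of $U_{\overline{\Q}}$; in particular, the $\overline{\Q}$-irreducible constituents occurring in the base change of one simple $\Q$-subrepresentation form a single Galois orbit, and two nonisomorphic simple $\Q$-subrepresentations have disjoint sets of $\overline{\Q}$-irreducible constituents. One way to see this is via $\mathrm{End}_G(U)$, which is a semisimple $\Q$-algebra with $\mathrm{End}_G(U)\otimes_\Q\C\cong\mathrm{End}_{G_\C}(U_\C)$: the number of simple factors of the right-hand side (hence of $\mathrm{End}_G(U)$) is the number of Galois orbits of $\C$-irreducible types in $U_\C$, and transitivity of Galois on the constituents of a $\Q$-irreducible follows from $\Q$-irreducibility.

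Then I would apply this with $G=\mathrm{CSpin}(V)$ and $U=C^+(V)$. If $n$ is odd, $U_\C$ has a single $\C$-irreducible type, so $U$ has a single $\Q$-isotypic component and all simple factors of $H_{KS}$ are mutually isomorphic, contradicting the existence of nonisomorphic $H$ and $H'$; hence $n$ is even. For $n$ even, $U_\C$ has exactly the two $\C$-irreducible types $S^+$ and $S^-$. If these were Galois-conjugate, $U$ would again have a single $\Q$-isotypic component, a contradiction; so they are not, and $U$ decomposes over $\Q$ into exactly two isotypic components $U^+$ and $U^-$, with $(U^\pm)_\C$ a direct sum of copies of $S^\pm$. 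Since $H\not\cong H'$, up to relabeling $H$ lies in $U^+$ and $H'$ in $U^-$, whence $H_\C$ is a direct sum of copies of one half-spin representation and $H'_\C$ a direct sum of copies of the other; the number of copies in each case equals the index of the relevant central division $\Q$-algebra, which must split over $\C$ and so equals $1$ or $2$, yielding the integer $r$ of the statement.

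The main obstacle is the Galois bookkeeping in the second step — namely verifying that a single $\Q$-irreducible subrepresentation cannot have base change meeting two Galois-inequivalent $\C$-irreducible types, and conversely that Galois-inequivalent $\C$-types give rise to genuinely distinct $\Q$-isotypic components. Everything else is a direct appeal to the explicit decomposition of $C^+(V_\C)$ recalled above, together with the (standard) nonisomorphism of the two half-spin representations as $\mathrm{CSpin}(V_\C)$-representations, obtained by restricting to $\mathrm{Spin}(V_\C)$.
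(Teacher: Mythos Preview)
Your argument is correct, but it is considerably more elaborate than the paper's. The paper's proof is essentially one line: since $H$ and $H'$ are nonisomorphic simple $\mathrm{CSpin}(V)$-representations, $\mathrm{Hom}_{\mathrm{CSpin}(V)}(H,H')=0$, hence
\[
\mathrm{Hom}_{\mathrm{CSpin}(V_\C)}(H_\C,H'_\C)=\mathrm{Hom}_{\mathrm{CSpin}(V)}(H,H')\otimes_\Q\C=0,
\]
so $H_\C$ and $H'_\C$ share no irreducible $\C$-constituent; given the explicit decomposition of $C^+(V_\C)$ recalled just before the lemma, this forces $n$ to be even and $H_\C$, $H'_\C$ to be sums of copies of distinct half-spin representations.

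You reach the same conclusion via the correspondence between $\Q$-isotypic components and Galois orbits of $\overline\Q$-irreducible types, then argue by cases on whether $S^+$ and $S^-$ are Galois-conjugate. This is valid, and this Galois picture is in fact what drives the finer trichotomy in the subsequent theorem, but for the lemma itself it is overkill: the $\mathrm{Hom}$ base-change observation already gives disjointness of the $\C$-constituents directly, with no need to mention Galois action at all. Your closing remark that $r\in\{1,2\}$ via the index of a $\C$-split central division algebra is correct but goes beyond what the lemma asserts; the paper defers this to the next theorem, where the endomorphism algebra of $H_{KS}$ is computed explicitly.
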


\begin{proof}
Since $H$ and $H'$ are nonisomorphic and simple, there are no nonzero $\mathrm{CSpin}(V)$-equivariant morphisms from $H$ to $H'$. As a consequence, there are no nonzero $\mathrm{CSpin}(V_\C)$-equivariant morphisms from $H_\C$ to $H'_\C$. This immediately implies the result by the above description of the representation $H_{KS, \C}$ of $\mathrm{CSpin(V_\C)}$.
\end{proof}

The following theorem is an elaboration on \cite[Section 8]{vanGeemen98}.

\begin{theorem}\label{theorem:simple-factors}
Let $\delta$ be the discriminant of the quadratic form $q$. Then the following holds.
\begin{enumerate}[(i)]
\item If $n$ is odd, then there exists a simple Hodge structure $H$ and a positive integer such that 
$$H_{KS}\simeq H^{\oplus N}.$$
There exists $r\in\{1, 2\}$ such that the representation of $\mathrm{CSpin}(V_\C)$ on $H_\C$ is a direct sum of $r$ copies of the spin representation. If $r=1$, then $H$ has dimension $2^{(n-1)/2}$ and $N=2^{(n-1)/2}.$ If $r=2$, then $H$ has dimension $2^{(n+1)/2}$ and $N=2^{(n-3)/2}.$

\item If $n$ is even and $(-1)^{n/2}\delta$ is not a square in $\Q$, then there exists a simple Hodge structure $H$ and a positive integer such that 
$$H_{KS}\simeq H^{\oplus N}.$$
There exists $r\in\{1, 2\}$ such that the representation of $\mathrm{CSpin}(V_\C)$ on $H_\C$ is a direct sum of $r$ copies of the direct sum of the two half-spin representations. If $r=1$, then $H$ has dimension $2^{n/2}$ and $N=2^{n/2-1}.$ If $r=2$, then $H$ has dimension $2^{n/2+1}$ and $N=2^{n/2-2}.$

\item If $n$ is even and $(-1)^{n/2}\delta$ is a square in $\Q$, then there exist two nonisomorphic simple Hodge structures $H$ and $H'$, and a positive integer $N$ such that
$$H_{KS}\simeq (H\oplus H')^{\oplus N}.$$
There exists a positive integer $r\in\{1, 2\}$ such that the representation of $\mathrm{CSpin}(V_\C)$ on $H_\C$ is a direct sum of $r$ copies of one of the half-spin representations, and the representation of $\mathrm{CSpin}(V_\C)$ on $H'_\C$ is a direct sum of $r$ copies of the other half-spin representation. If $r=1$, then $H$ and $H'$ have dimension $2^{n/2-1}$ and $N=2^{n/2-1}$. If $r=2$, then $H$ and $H'$ have dimension $2^{n/2}$ and $N=2^{n/2-2}$. 
\end{enumerate}
\end{theorem}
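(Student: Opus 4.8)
The strategy is to combine the explicit description of the representation $C^+(V_\C)$ of $\mathrm{CSpin}(V_\C)$ recalled just above with a Galois-descent argument to pass from the decomposition over $\C$ to a decomposition over $\Q$. First I would reduce to understanding, for each case, how the half-spin (or spin) representations are permuted by $\mathrm{Gal}(\overline{\Q}/\Q)$, or more precisely by the action coming from the $\Q$-structure on $\mathrm{CSpin}(V)$ and on $C^+(V)$. The key invariant controlling this is the center of the even Clifford algebra: when $n$ is even the center of $C^+(V)$ is $\Q[\sqrt{(-1)^{n/2}\delta}]$ (a standard computation with the Clifford algebra; see e.g.\ the references to \cite{vanGeemen98} and \cite{Deligne72} already in play), and the two half-spin representations are interchanged precisely when this center is a field, i.e.\ when $(-1)^{n/2}\delta$ is not a square. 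When $n$ is odd the center of $C^+(V)$ is just $\Q$ and there is a single spin representation up to isomorphism over $\overline\Q$. This dichotomy is exactly the trichotomy $(i)$/$(ii)$/$(iii)$ of the statement.

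Next, in each case I would identify the simple $\Q$-Hodge substructures. Since the Mumford-Tate group of $H_{KS}$ is $\mathrm{CSpin}(V)$ (established above under the hypothesis $MT(V)=SO(V)$), the simple factors of the Hodge structure $H_{KS}$ coincide with the simple $\Q$-subrepresentations of $\mathrm{CSpin}(V)$ acting on $C^+(V)$, by Proposition \ref{proposition:reductive-semisimple} and the general correspondence between sub-Hodge structures and subrepresentations. In case $(i)$ and case $(ii)$, $C^+(V_\C)$ is isotypic (a single $\overline\Q$-irreducible, resp.\ the sum of the two half-spins which form a single Galois orbit), so the $\Q$-irreducible $H$ is obtained by Galois descent; its dimension over $\Q$ is the dimension over $\overline\Q$ of its complexification, which is either the half-spin dimension or twice it, according to whether the relevant $\overline\Q$-irreducible is already defined over $\Q$ (the Schur index / splitting behaviour of the simple algebra factor of $C^+(V)$). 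This is the source of the parameter $r\in\{1,2\}$: $r=2$ when the endomorphism algebra of the $\Q$-irreducible is a noncommutative division algebra (a quaternion algebra), forcing the $\Q$-irreducible to have twice the "expected" dimension, and correspondingly the multiplicity $N$ is divided by two so that $\dim H\cdot N=\dim C^+(V)=2^{n-1}$ is preserved. In case $(iii)$ the two half-spins are each individually defined over the split center $\Q\times\Q$, giving two nonisomorphic $\Q$-irreducibles $H$, $H'$ (Lemma \ref{lemma:simple-basic} guarantees they cannot be isomorphic), again with the same $r$ on both sides by symmetry of the two factors of the center.

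Finally, I would pin down the numerical values of $\dim H$ and $N$ in each case purely by bookkeeping: the total dimension is $\dim_\Q C^+(V)=2^{n-1}$, the $\overline\Q$-dimension of a (half-)spin representation is $2^{(n-1)/2}$ for $n$ odd and $2^{n/2-1}$ for $n$ even, and the number of copies of each irreducible over $\overline\Q$ is read off from the description recalled above ($2^{(n-1)/2}$ copies of the spin rep for $n$ odd; $2^{n/2-1}$ copies of each half-spin for $n$ even). Multiplying or dividing by $r$ in the appropriate slot then yields the stated formulas. The main obstacle is the representation-theoretic input in the second step: correctly determining, over $\Q$, the endomorphism algebra of the $\Q$-form of the spin representation — equivalently, the Brauer class of the relevant simple factor of $C^+(V)$ — and checking that the only possibilities are that it is split (giving $r=1$) or a quaternion algebra (giving $r=2$), with no higher Schur index occurring. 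This is where I would lean on the structure theory of Clifford algebras of quadratic forms over $\Q$ (the even Clifford algebra is a product of at most two central simple algebras over its center, each of exponent dividing $2$, hence Schur index $1$ or $2$), together with the fact that scalar extension to $\R$ via $h'(i)$ already constrains these algebras; I would cite \cite{vanGeemen98} for the explicit identification in the orthogonal setting.
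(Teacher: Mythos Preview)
Your proposal is correct and follows essentially the same route as the paper: both arguments identify the simple Hodge factors of $H_{KS}$ with the simple $\Q$-subrepresentations of $\mathrm{CSpin}(V)$ on $C^+(V)$, and both reduce the trichotomy and the parameter $r\in\{1,2\}$ to the structure of $C^+(V)$ as a central simple algebra over its center, citing \cite{vanGeemen98}. The paper is slightly more direct in that it invokes \cite[Theorem 7.7]{vanGeemen98} to write $\mathrm{End}_{\mathrm{CSpin}(V)}(C^+(V))$ explicitly as a matrix algebra over a quaternion algebra $D$ (over $\Q$, $\Q(\sqrt{(-1)^{n/2}\delta})$, or $\Q\times\Q$ in the three cases) and reads off $N$ and $\dim H$ from whether $D$ is split, whereas you phrase the same computation in the language of Galois descent and Schur indices; one small caution is that your inference ``exponent dividing $2$, hence Schur index $1$ or $2$'' is not automatic over a general field and implicitly uses that exponent equals index over number fields, which is exactly what the explicit identification in \cite{vanGeemen98} encodes.
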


In \cite{vanGeemen98}, it is shown that all the situations above do occur.

\begin{proof}
Lemma \ref{lemma:simple-basic} shows that there are at most two nonisomorphic simple factors of the Hodge structure $H_{KS}$, and that if $n$ is odd, there is only one simple factor. 

We proceed to investigate the structure of the endomorphism algebra of the Hodge structure $H_{KS}$. Since the Mumford-Tate group of $H_{KS}$ is $\mathrm{CSpin}(V)$, we have:
$$\mathrm{End}_{\mathrm{Hodge}}(H_{KS})=\mathrm{End}_{\mathrm{CSpin}(V)}(C^+(V)),$$
where $\mathrm{CSpin}(V)$ acts on the Clifford algebra $C^+(V)$ by multiplication on the left. This latter algebra is computed in \cite[Theorem 7.7]{vanGeemen98}.

\bigskip

Assume that $n$ is odd. Then
$$\mathrm{End}_{\mathrm{Hodge}}(H_{KS})\simeq M_{2^{(n-3)/2}}(D),$$
where $D$ is a quaternion algebra over $\Q$. This proves that, in case $(i)$, we have 
$$H_{KS}\simeq H^{\oplus 2^{(n-3)/2}}$$
or
$$H_{KS}\simeq H^{\oplus 2^{(n-1)/2}}$$
depending on whether or not $D$ is split. Accordingly, the dimension of $H$ is either $2^{(n+1)/2}$ or $2^{(n-1)/2}$, so that $H$ is the sum of $1$ or $2$ copies of the spin representation. This is the case described in $(i)$.

\bigskip

Assume that $n$ is even and $(-1)^{n/2}\delta$ is not a square in $\Q$.  Then 
$$\mathrm{End}_{\mathrm{Hodge}}(H_{KS})\simeq M_{2^{n/2-1}}(D),$$
where $D$ is a quaternion algebra over $\Q(\sqrt{(-1)^{n/2}\delta}).$

If $D$ is split, then we argue as above to show that 
$$H_{KS}\simeq H^{\oplus 2^{n/2}},$$
where $H$ is simple and $H_\C$ is isomorphic to the sum of two copies of the sum of the two half-spin representations. 

If $D$ is nonsplit, then 
$$H_{KS}\simeq H^{\oplus 2^{n/2}-2},$$
where $H$ is simple and $H_\C$ is isomorphic to the sum of the two half-spin representations. This is the case described in $(ii)$.

\bigskip

Assume that $n$ is even and $(-1)^{n/2}\delta$ is a square in $\Q$. Then 
$$\mathrm{End}_{\mathrm{Hodge}}(H_{KS})\simeq M_{2^{n/2-2}}(D)\times M_{2^{n/2-2}}(D),$$
where $D$ is a quaternion algebra over $\Q.$ 

If $D$ is split, then 
$$\mathrm{End}_{\mathrm{Hodge}}(H_{KS})\simeq M_{2^{n/2-1}}(\Q)\times M_{2^{n/2-1}}(\Q),$$
so that 
$$H_{KS}\simeq (H\oplus H')^{\oplus 2^{n/2-1}},$$
where $H$ and $H'$ are two nonisomorphic simple factors of $H_{KS}$. Lemma \ref{lemma:simple-basic} shows that $H_\C$ is a sum of copies of one of the half-spin representation, and $H'_\C$ a sum of copies of the other. The structure of the representation $H_{KS, \C}$ implies that $H_\C$ is actually isomorphic to one of the half-spin representations, and $H'_\C$ to the other. 

If $D$ is nonsplit, the same argument shows that
$$H_{KS}\simeq (H\oplus H')^{\oplus 2^{n/2-2}},$$
where $H$ and $H'$ are two nonisomorphic simple factors of $H_{KS}$, $H_\C$ is isomorphic to the sum of two copies of one of the half-spin representation, and $H'_\C$ to the sum of two copies of the other. This is the case described in $(iii)$.
\end{proof}

\begin{example}
Let $T$ be a very general complex torus of dimension $4$, and let 
$$V=H^2(T, \Q)$$
endowed with the quadratic form $q$ given by cup-product. Then $(V, q)$ is isomorphic to the sum of three copies of the hyperbolic plane, so that the discriminant $\delta$ of $q$ is $-1$. Since $V$ is $6$-dimensional, this proves that we are in the situation of $(iii)$ above. It is possible to show that the simple factors of $H_{KS}$ are $H^1(T, \Q)$ and its dual, which are nonisomorphic -- note that $T$ is not an abelian variety. The reader may consult \cite{Morrison85} for the related case of abelian surfaces.
\end{example}

\begin{example}
Let $X$ be a holomorphic symplectic variety. Then $H^2(X, \Q)$ endowed with its Beauville-Bogomolov quadratic form $q$ is a Hodge structure of type $K3$. The quadratic form $q$ has signature $(3, b_2(X)-3)$, where $b_2(X)$ is the dimension of $H^2(X, \Q)$. Let $\delta$ be the discriminant of $q$. 

Assume that $b_2(X)$ is odd, and $V=H^2(X, \Q)$. Then we are in case $(i)$  and the unique simple factor of $H_{KS}$ has dimension $2^{(b_2(X)+1)/2}$ or $2^{(b_2(X)-1)/2}$.

Assume that $b_2(X)$ is divisible by $4$, and $V=H^2(X, \Q)$. Then $\delta$ is negative, and $(-1)^{b_2(X)/2}\delta$ is negative, so that we are in case $(ii)$  and the unique simple factor of $H_{KS}$ has dimension $2^{b_2(X)/2}$ or $2^{b_2(X)/2+1}$.

Assume that $b_2(X)$ is even, $X$ is equipped with an ample line bundle, and $V$ is the orthogonal of the polarization in $H^2(X, \Q)$. Then we are in case $(i)$  and the unique simple factor of $H_{KS}$ has dimension $2^{b_2(X)/2-1}$ or $2^{b_2(X)/2}$.

Assume that $b_2(X)$ is congruent to $3$ modulo $4$, $X$ is equipped with an ample line bundle, and $V$ is the orthogonal of the polarization in $H^2(X, \Q)$. Then the discriminant of the restriction of $q$ to $V$ is positive, so that we are in case $(ii)$ again, and the unique simple factor of $H_{KS}$ has dimension $2^{(b_2(X)-1)/2}$ or $2^{(b_2(X)+1)/2}$.

\end{example}

Two special cases in even dimension are as follows. 
%
%
%
%
%
%

\subsection{A preliminary group-theoretic result}

Let $V_\C$ be a finite-dimensional  complex vector space endowed with a nondegenerate quadratic form $q_\C$. Let $u$ and $v$ be two elements of $V$ with 
$$q(u)=q(v)=1$$
and
$$q(u, v)=0.$$
Let 
$$\nu : \mathbb G_{m, \C}\lra SO(V_\C),\, z\mapsto \nu(z)$$
be the cocharacter that acts on $u$ (resp. $v$) by multiplication by $z$ (resp. $z^{-1}$) and acts trivially on the orthogonal of the $2$-plane generated by $u$ and $v$. 

It is readily checked that the action of $\mathbb G_{m, \C}$ on $\mathrm{Lie}\,SO(V_\C)$ through $\nu$ and the adjoint action factors through the characters $z$, $z^{-1}$ and $1$. As a consequence of \cite[1.2.5]{Deligne79}, see also \cite[2.3]{Milne13}, we may attach to the conjugacy class of $\nu$ a \emph{special vertex} of the Dynkin diagram of $SO(V_\C)$ as follows. Choose a maximal torus $T$ of $SO(V_\C)$ through which $\nu$ factors, $S$ a basis for the root system of $T$, $R^+$ the corresponding set of positive roots. After conjugating $\nu$, we may assume that the integers $\langle \alpha, \nu\rangle$ are nonnegative for $\alpha\in R^+$. Then there exists a unique simple root $\alpha$ for which 
$$\langle \alpha, \nu\rangle=1.$$
The corresponding vertex of the Dynkin diagram of $SO(V_\C)$ is the special vertex associated to $\nu.$

\begin{lemma}\label{lemma:special-vertex}
The special vertex associated to $\nu$ is the leftmost vertex of the Dynkin diagram of $SO(V_\C)$.
\end{lemma}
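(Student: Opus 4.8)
The plan is to choose a maximal torus of $SO(V_\C)$ adapted to $\nu$, in which $\nu$ becomes the standard coweight $\varepsilon_1^\vee$, and then to read the special vertex off Bourbaki's tables. Write $n=\dim_\C V_\C$, so that $SO(V_\C)$ has type $B_m$ if $n=2m+1$ and type $D_m$ if $n=2m$; in Bourbaki's numbering the leftmost vertex of the Dynkin diagram is, in both cases, the simple root $\alpha_1=\varepsilon_1-\varepsilon_2$ attached to the standard representation (for $D_m$ the fork $\{\alpha_{m-1},\alpha_m\}$ sits at the opposite end). The first point to record is the shape of the weight decomposition of $V_\C$ under $\nu$: by construction $V_\C=V_+\oplus V_0\oplus V_-$, where $\nu(z)$ acts by $z$ on the line $V_+$, by $z^{-1}$ on the line $V_-$, and trivially on $V_0$. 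Since $\nu$ lands in $SO(V_\C)$, for $x,y\in V_+$ one has $q_\C(x,y)=q_\C(\nu(z)x,\nu(z)y)=z^2\,q_\C(x,y)$, so $V_+$ is isotropic, and likewise $V_-$, while the same computation gives $V_+\perp V_0$ and $V_-\perp V_0$. Hence $P:=V_+\oplus V_-$ is a nondegenerate hyperbolic plane with orthogonal complement $V_0$.

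Next I would build the torus. Choose an orthogonal decomposition $V_\C=P\oplus P_2\oplus\cdots\oplus P_m$, together with an extra line when $n$ is odd, into nondegenerate $2$-planes; take in $P$ the hyperbolic basis $(a_1,b_1)$ with $a_1\in V_+$, $b_1\in V_-$ and $q_\C(a_1,b_1)=1$, and in each $P_i$ a hyperbolic basis $(a_i,b_i)$ with $q_\C(a_i,b_i)=1$. Let $T\subset SO(V_\C)$ be the maximal torus acting diagonally in the basis $(a_1,\dots,a_m,b_m,\dots,b_1)$ (and fixing the extra line), with character $\varepsilon_i$ sending $t\in T$ to its eigenvalue on $a_i$. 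Because we chose $a_1\in V_+$, one gets $\nu=\varepsilon_1^\vee$ on the nose, i.e. the cocharacter determined by $\langle\varepsilon_i,\nu\rangle=\delta_{i1}$; and in Bourbaki's conventions the positive roots of $(SO(V_\C),T)$ are the $\varepsilon_i\pm\varepsilon_j$ with $i<j$, together with the $\varepsilon_i$ when $n$ is odd, while the simple roots are $\alpha_i=\varepsilon_i-\varepsilon_{i+1}$ for $1\le i\le m-1$ and $\alpha_m=\varepsilon_m$ in type $B$, $\alpha_m=\varepsilon_{m-1}+\varepsilon_m$ in type $D$.

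Finally I would compute the pairings. For every positive root $\alpha$ one finds $\langle\alpha,\nu\rangle\in\{0,1\}$, so this base already puts $\nu$ in the closed dominant chamber and no conjugation is needed; and $\langle\alpha_i,\nu\rangle=\delta_{i1}$ for every simple root $\alpha_i$. Hence $\alpha_1$ is the unique simple root pairing to $1$ with $\nu$ — that is, the special vertex attached to $\nu$ — and it is the leftmost vertex of the Dynkin diagram, as claimed. I expect the only genuinely delicate step to be the construction of $T$: one must know that the $z$- and $z^{-1}$-weight lines of $\nu$ form a bona fide hyperbolic pair, so that they extend to a hyperbolic basis of $V_\C$ realizing $\nu$ as $\varepsilon_1^\vee$. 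This is exactly what the first paragraph supplies; alternatively one may invoke Witt's extension theorem to move $(V_+,V_-)$ onto a chosen hyperbolic pair, after which $\nu$ is forced to equal $\varepsilon_1^\vee$ up to inversion — and inversion is harmless, since $\nu$ and $\nu^{-1}$ are Weyl-conjugate in both types $B_m$ and $D_m$. (The very small cases $n\le 6$, where the root system degenerates, can be checked by hand and do not affect the identification of the leftmost vertex.)
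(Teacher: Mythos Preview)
Your proof is correct and follows essentially the same route as the paper: choose a hyperbolic basis of $V_\C$ with the $z$- and $z^{-1}$-weight lines of $\nu$ as the first hyperbolic pair, so that $\nu$ becomes $\varepsilon_1^\vee$ in the resulting diagonal torus, and then read off $\langle\alpha_i,\nu\rangle=\delta_{i1}$ from the standard simple roots for $B_m$ or $D_m$. The paper carries this out explicitly only in the even case (writing $e_1=u$, $f_1=v$ and the usual diagonal torus) and declares the odd case similar; your version treats both types at once and is a bit more careful in justifying that the weight lines really form a hyperbolic pair before extending them to a full basis, which the paper takes for granted.
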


\begin{proof}
We assume that the dimension of $V_\C$ is an even number $2m$, the odd case being similar. Consider a basis $e_1,\ldots, e_m, f_1, \ldots, f_m$ of $V_\C$ with $e_1=u, f_1=v$, and, for all $i, j$, 
$$q(e_i, e_j)=q(f_i, f_j)=0$$
and
$$q(e_i, f_j)=\delta^i_j.$$
Let $T$ be the maximal torus of $SO(V_\C)$ consisting of diagonal matrices with diagonal coefficients 
$$(t_1, \ldots, t_m, t_1^{-1}, \ldots, t_m^{-1})$$
and identify the $t_i$ with characters of $T$. Then we may take for $R^+$ the simple roots
$$\alpha_1=t_1/t_2,\, \ldots,\, \alpha_{n-1}=t_{n-1}/t_n,\, \alpha_n=t_{n-1}t_n.$$
We obtain
$$\langle \alpha_1, \nu\rangle=1$$
and
$$\langle \alpha_i, \nu\rangle=0$$
for $i>1.$ This proves that the special vertex corresponds to the simple root $\alpha_1$.
\end{proof}

We keep the notation above.

\begin{proposition}\label{proposition:classification}
Let 
$$G_{1, \C}\lra SO(V_\C)$$
be an isogeny of complex algebraic groups and let
$$\widetilde \nu : \widetilde{\mathbb G_{m, \C}}\lra G_{1, \C}$$
be the fractional cocharacter of $G_{1, \C}$ lifting $\nu$. Let $W$ be an simple representation of $G_{1, \C}$ such that the action of $\widetilde{\mathbb G_{m, \C}}$ on $W$ via $\widetilde\nu$ has only two weights $a$ and $a+1$ for some rational number $a$. Then $G_{1, \C}$ is the universal cover of $SO(V_\C)$. Furthermore, if $\dim V_\C$ is odd, then $W$ is the spin representation of $G_1$, and, if $\dim V_\C$ is even, $W$ is one of the two half-spin representations of $G_1.$
\end{proposition}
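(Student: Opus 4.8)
The plan is to run the argument in the explicit coordinates already introduced in the proof of Lemma~\ref{lemma:special-vertex}. We may assume $\dim V_\C\geq 3$, since for $\dim V_\C\leq 2$ the group $SO(V_\C)$ is a torus, every simple representation is one-dimensional, and the statement is vacuous; thus the root system of $SO(V_\C)$ is of type $B_m$ if $\dim V_\C=2m+1$ and of type $D_m$ if $\dim V_\C=2m$. Fix a basis $e_1,\ldots,e_m,f_1,\ldots,f_m$ of $V_\C$ with $e_1=u$ and $f_1=v$ exactly as in that proof, let $T$ be the resulting diagonal maximal torus of $SO(V_\C)$ with characters $t_1,\ldots,t_m$, and let $T_1$ be the maximal torus of $G_{1,\C}$ lying above $T$ through which $\widetilde\nu$ factors. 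Since $\nu$ is the cocharacter $z\mapsto\mathrm{diag}(z,1,\ldots,1,z^{-1},1,\ldots,1)$ one has $\langle t_1,\nu\rangle=1$ and $\langle t_i,\nu\rangle=0$ for $i\geq 2$; applying this to $2\mu$ (which lies in $X^*(T)$ for every $\mu\in X^*(T_1)$) shows that $\langle\mu,\widetilde\nu\rangle$ is simply the coefficient of $t_1$ in $\mu$. Hence the $\widetilde\nu$-weights occurring in $W$ are exactly the $t_1$-coordinates of the weights of $W$.

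Next I would pin down the highest weight. Write $W=V(\lambda_0)$ with $\lambda_0=\sum_i a_i t_i$ dominant, so that $a_1\geq a_2\geq\cdots\geq a_m\geq 0$ in type $B_m$ and $a_1\geq\cdots\geq a_{m-1}\geq|a_m|$ in type $D_m$, and recall that the set of weights of $W$ is stable under the Weyl group and therefore contains the whole orbit of $\lambda_0$. The weight $\lambda_0$ contributes the $t_1$-coordinate $a_1$, while the Weyl conjugate of $\lambda_0$ obtained by negating its first coordinate — a legal move in the Weyl group of $B_m$, and also of $D_m$ after negating a second, possibly zero, coordinate so as to keep the number of sign changes even — contributes the $t_1$-coordinate $-a_1$. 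Since $W$ has exactly the two $\widetilde\nu$-weights $a$ and $a+1$, and since $W$ is not the trivial representation so that $\lambda_0\neq 0$ and $a_1>0$, the two distinct values $a_1$ and $-a_1$ must form the set $\{a,a+1\}$, which forces $a_1=\tfrac12$.

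Finally, $a_1=\tfrac12\notin\Z$ forces every $a_i$ to be a half-integer, and the dominance inequalities together with $a_1=\tfrac12$ then squeeze each $a_i$ into $\{-\tfrac12,\tfrac12\}$; hence $\lambda_0=\varpi_m=\tfrac12(t_1+\cdots+t_m)$ in type $B_m$, and $\lambda_0\in\{\varpi_{m-1},\varpi_m\}$ in type $D_m$ — in all cases a (half-)spin weight. Such a weight is not a character of the maximal torus $T$ of $SO(V_\C)$, so $G_{1,\C}$ is an isogenous cover of $SO(V_\C)$ distinct from $SO(V_\C)$ itself; since the only such cover is the simply connected one, $G_{1,\C}=\mathrm{Spin}(V_\C)$, the universal cover of $SO(V_\C)$, and $W=V(\lambda_0)$ is the spin representation when $\dim V_\C$ is odd and one of the two half-spin representations when $\dim V_\C$ is even. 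I do not anticipate any real obstacle here: the only steps needing care are the parity bookkeeping that realizes the $t_1$-coordinate $-a_1$ inside the Weyl group of $D_m$, and keeping straight which cover of $SO(V_\C)$ is singled out by the requirement that its character lattice contain a half-spin weight.
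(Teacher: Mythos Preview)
Your argument is correct and complete, but it takes a genuinely different route from the paper's proof. The paper does not compute anything: it simply invokes the Satake--Deligne classification of symplectic embeddings of semisimple groups. Concretely, the paper uses Lemma~\ref{lemma:special-vertex} to identify the special vertex attached to the conjugacy class of $\nu$ as the leftmost one, notes that this means the relevant Dynkin diagrams are $B_n$ and (in Deligne's notation) $D_n^{\R}$, and then reads off from \cite[Table~1.3.9]{Deligne79} (equivalently the last table in \cite{Satake65}) that the only simple representations meeting the two-weight condition are the spin and half-spin representations.

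Your approach, by contrast, is a hands-on highest-weight computation in the explicit coordinates of Lemma~\ref{lemma:special-vertex}: you observe that the $\widetilde\nu$-weight of a weight $\mu$ is its $t_1$-coordinate, produce both $a_1$ and $-a_1$ as $t_1$-coordinates via the Weyl group, and conclude $a_1=\tfrac12$; then integrality of the weight lattice of $\mathrm{Spin}$ together with dominance pins $\lambda_0$ down to a (half\nobreakdash-)spin fundamental weight and forces $G_{1,\C}=\mathrm{Spin}(V_\C)$. Each step is sound, including the parity bookkeeping in type~$D_m$ (negating coordinates $1$ and $j$ for any $j\geq 2$ realises $-a_1$) and the observation that a half-integral highest weight cannot live on $SO(V_\C)$, whose only connected cover is $\mathrm{Spin}(V_\C)$.

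The trade-off is clear. Your argument is self-contained and elementary --- it requires nothing beyond the standard description of the root systems $B_m$, $D_m$ and their weight lattices --- and it makes transparent exactly why the two-weight constraint singles out the spin representations. The paper's argument is shorter on the page but relies on a substantial external classification; in exchange, it situates the result within the general theory of which semisimple groups admit faithful representations giving rise to abelian-type Hodge structures, which is the natural conceptual home for this proposition.
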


\begin{proof}
The classification of all complex representations of $G_1$ as in the statement of the proposition is due to Satake \cite{Satake65}, and is explained in \cite[1.3.5--1.3.9]{Deligne79}. In the end, table 1.3.9 there, or the last table in \cite{Satake65} proves the result -- indeed, Lemma \ref{lemma:special-vertex} proves that, we the notation of Deligne, we are only considering the diagrams $B_n$ and $D_n^\R$. The reader may also consult \cite[Section 2 and Section 10]{Milne13} for a more detailed discussion, where the Dynkin diagram equipped with a special vertex is denoted by $D_n(1)$.
\end{proof}

\subsection{}

The goal of this section is to prove the following result. 

\begin{theorem}\label{theorem:KS-univ}
Let $V$ be a Hodge structure of $K3$ type, of dimension $n$, let $q$ be a Beauville-Bogomolov form for $V$ and let $T$ be a complex torus. Assume that there exist integers $a$, $b$ and $c$ together with an injective morphism of Hodge structures
$$V\hlra H^1(T, \Q)^{\otimes a}\otimes (H^1(T, \Q)^\vee)^{\otimes b}\otimes\Q(c)$$
with $a$ and $b$ nonnegative.

Assume that the Mumford-Tate group of $V$ is the full special orthogonal group $SO(V)$. Then $T$ contains a simple factor of the Kuga-Satake variety of $X$ as a subquotient up to isogeny. 

In particular, the dimension of $T$ is at least $2^{n/2-2}$ if $n$ is even, and $2^{(n-3)/2}$ if $n$ is odd. If $n$ is even and $(-1)^{n/2}\delta$ is not a square in $\Q$, where $\delta$ is the discriminant of $q$, then the dimension of $T$ is at least $2^{n/2-1}.$
\end{theorem}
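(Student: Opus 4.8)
The plan is to combine the general abelian-type theorem of Section 3 with the explicit description of the simple factors of the Kuga-Satake Hodge structure from Theorem \ref{theorem:simple-factors} and the representation-theoretic rigidity of Proposition \ref{proposition:classification}. First, set $W = H^1(T, \Q)$, a polarizable Hodge structure of type $\{(0,1),(1,0)\}$; since $V$ embeds in $W^{\otimes a}\otimes (W^\vee)^{\otimes b}\otimes \Q(c)$, and $V$ is polarizable (indeed, $q$ or a suitable modification gives a polarization, and anyway a Hodge structure with Mumford-Tate group $SO(V)$ is polarizable by the standard Cartan involution argument), Theorem \ref{theorem:torus-abelian} produces a polarizable direct factor $W'$ of $W^{\mathrm{ss}}$ together with an injection $V \hookrightarrow W'^{\otimes a'} \otimes (W'^\vee)^{\otimes b'} \otimes \Q(c')$. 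Up to replacing $W'$ by one of its simple summands, I may assume $W'$ is simple. The key point to extract is the relation between the Mumford-Tate group of $W'$ and $SO(V) = MT(V)$: the embedding of $V$ gives a surjection $MT(W') \twoheadrightarrow MT(V) = SO(V)$, and since $W'$ is simple its Mumford-Tate group is $\Gm \cdot G_1$ where $G_1$ maps onto $SO(V)$ (or onto $SO(V)^{\mathrm{der}}$, up to the center) by an isogeny — this is exactly the structure analyzed in Proposition \ref{proposition:unique-action}(iii).

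Next I would analyze the Hodge structure on $W'$ as a representation of $G_1$. The cocharacter $\mu$ of the Deligne torus gives, via the Hodge structure on $V$ of $K3$ type, precisely the cocharacter $\nu$ of $SO(V_\C)$ studied in Section 4.2 (acting with weights $1, 0, -1$ on $V_\C$, i.e. shifting only a single line up and a single line down — this is the content of Lemma \ref{lemma:special-vertex} identifying the special vertex). Lifting $\mu$ to the isogeny $G_{1,\C} \to SO(V_\C)$ gives a fractional cocharacter $\widetilde\nu$, and the Hodge structure on $W'$, being of type $\{(0,1),(1,0)\}$ up to a rational Tate twist, means that $\widetilde\nu$ acts on $W'_\C$ with only two weights $a$ and $a+1$. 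Now Proposition \ref{proposition:classification} applies verbatim: $G_{1,\C}$ must be the universal cover $\mathrm{Spin}(V_\C)$ of $SO(V_\C)$, and $W'_\C$ is the spin representation if $n$ is odd, or one of the two half-spin representations if $n$ is even. Hence over $\Q$, the simple Hodge structure $W'$ has Mumford-Tate group equal (up to central isogeny, and after adjoining homotheties) to $\mathrm{CSpin}(V)$, and its complexification is a sum of (half-)spin representations — exactly the description of the simple factors $H$ (or $H, H'$) of $H_{KS}$ given in Theorem \ref{theorem:simple-factors}.

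It then remains to upgrade the isomorphism at the level of Mumford-Tate groups and complexified representations to an actual isogeny relation between the rational Hodge structures $W'$ and a simple factor of $H_{KS}$. Since $W'$ is simple with $MT(W') = \mathrm{CSpin}(V)$ acting through a representation whose complexification is a direct sum of copies of a single (half-)spin representation, and the simple factor $H$ of $H_{KS}$ shares the same Mumford-Tate group and the same complexified isotypic component, the two are isomorphic as $\mathrm{CSpin}(V)$-representations up to a Tate twist; since both are weight-$1$ Hodge structures with the same $h'$, they are isomorphic as Hodge structures, hence correspond to isogenous complex tori. Via the equivalence of categories between complex tori up to isogeny and weight-one Hodge structures, $W' = H^1(T', \Q)$ for $T'$ a subquotient up to isogeny of $T$ (this is how $W'$ arose: a direct factor of a subquotient of $H^1(T,\Q)$), and $T'$ is isogenous to a simple factor of the Kuga-Satake variety. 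The dimension bounds then follow by reading off $\dim H = \tfrac12 \dim_\C C^+(V)/(\text{multiplicity})$ from the three cases of Theorem \ref{theorem:simple-factors}: $\dim T = \tfrac12 \dim H$ is at least $2^{n/2-2}$ when $n$ is even (worst case $r=1$ in case (iii)), at least $2^{n/2-1}$ when $n$ is even with $(-1)^{n/2}\delta$ a non-square (case (ii)), and at least $2^{(n-3)/2}$ when $n$ is odd (case (i) with $r=1$).

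The main obstacle I anticipate is \emph{bookkeeping the Tate twists and the passage from fractional to genuine Hodge structures} cleanly: Proposition \ref{proposition:classification} only controls the weights of $\widetilde\nu$ up to the ambiguity of an overall rational shift $a$, and one must check that the resulting weight-one Hodge structure on $W'$ is not merely abstractly isomorphic to a sum of (half-)spin pieces as a $\mathrm{CSpin}(V_\C)$-representation but actually matches the Kuga-Satake Hodge structure $C^+(V)$ on the nose (up to $\Q(c')$), using that the morphism $h'$ lifting $h$ to $\mathrm{CSpin}(V)$ is unique. A secondary subtlety is ensuring that "shares a subquotient with the Kuga-Satake variety" is literally what the chain of subquotients-of-$W^{\mathrm{ss}}$ from Theorem \ref{theorem:torus-abelian} delivers — one should track that $W'$, being a direct factor of $(H^1(T,\Q))^{\mathrm{ss}}$, corresponds to a subquotient of $T$ up to isogeny, and that the reductive-quotient identifications of Section 2.2 are compatible with this.
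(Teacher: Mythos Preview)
Your overall strategy matches the paper's: reduce to the semisimple situation, isolate a normal simple factor $G_1$ of the Mumford--Tate group mapping isogenously onto $SO(V)$, use Proposition~\ref{proposition:unique-action} to find a simple piece of $H^1(T,\Q)^{\mathrm{ss}}$ on which $G_1$ acts with two consecutive weights, and then apply Proposition~\ref{proposition:classification} and Theorem~\ref{theorem:simple-factors}. But there is a genuine gap in how you get there.

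You invoke Theorem~\ref{theorem:torus-abelian}, whose hypothesis is that $V$ is \emph{polarizable}, and you assert this holds because ``$q$ or a suitable modification gives a polarization'' or because ``a Hodge structure with Mumford--Tate group $SO(V)$ is polarizable by the standard Cartan involution argument''. Both justifications fail. A Beauville--Bogomolov form only requires $q$ to be positive definite on the real part of $V^{1,-1}\oplus V^{-1,1}$; it says nothing about the signature on $V^{0,0}_\R$. Conjugation by $h(i)$ is a Cartan involution of $SO(V)^{\mathrm{ad}}_\R$ precisely when $q$ has signature $(2,n-2)$, and only then is $V$ polarizable. The paper's own example ($H^2$ of a very general complex torus, where $q$ has signature $(3,3)$) is exactly a case where $V$ is \emph{not} polarizable. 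So Theorem~\ref{theorem:torus-abelian} does not apply, and your first step collapses. (You also call $W=H^1(T,\Q)$ polarizable; this is false for a general complex torus and is in fact the whole point of the non-polarized setting.)

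The paper avoids this by not invoking Theorem~\ref{theorem:torus-abelian} at all. It uses only the ingredients of its proof that require $MT(V)$ to be \emph{reductive}, which is automatic here since $MT(V)=SO(V)$: namely, the semisimplification argument of \S\ref{subsubsection:reductive} to assume $G=MT(H)$ is reductive, and then a direct decomposition $G_1\times G_2\to G$ with $G_1$ isogenous to $SO(V)$, followed by Proposition~\ref{proposition:unique-action} to produce the sub-Hodge structure $H_1$ with the two-weight property. Your reduction ``up to replacing $W'$ by one of its simple summands'' is really this step in disguise, but it needs Proposition~\ref{proposition:unique-action} to pick the \emph{correct} summand; as stated it is unjustified, since $V$ could a priori sit in a tensor product mixing several simple summands.

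Finally, your matching of $W'$ with a simple factor of $H_{KS}$ claims that two $\mathrm{CSpin}(V)$-representations over $\Q$ with isomorphic complexifications are isomorphic. That is not true in general. The paper argues more simply: once $H_{1,\C}$ contains a (half-)spin representation, $\Hom_{\mathrm{CSpin}(V_\C)}(H_{1,\C},H_{KS,\C})\neq 0$; since this space is $\Hom_{\mathrm{CSpin}(V)}(H_1,H_{KS})\otimes_\Q\C$, the rational $\Hom$ is nonzero too, so $H_1$ and $H_{KS}$ share a simple factor. That simple factor is then a subquotient of $H^1(T,\Q)$, and the dimension bounds follow from Theorem~\ref{theorem:simple-factors} as you say.
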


\begin{proof}
Let $H=H^1(T, \Q)$, and let $G$ be the Mumford-Tate group of $H$. By assumption, the Mumford-Tate group of $V$ is reductive. Arguing as in \ref{subsubsection:reductive}, we may replace $H$ with its semisimplification and assume that $G$ is reductive.

Let $\mathfrak g$ be the Lie algebra of $G$. As in \ref{subsubsection:setting}, we obtain a surjection 
$$p : G\lra SO(V)=MT(V)$$
and a commutative diagram
\[
\xymatrix{
\Sd\ar[r]^{h_G}\ar[dr]^h & G_\R\ar[d]^{p_\R}\\
& SO(V)_\R
}
\]
where $h$ and $h_G$ are the morphisms defining the Hodge structures on $V$ and $H$ respectively. Let 
$$\nu : \mathbb G_{m, \C}\lra SO(V)_\C$$
be the cocharacter defined as the composition
$$\mathbb G_{m, \C}\stackrel{\mu}{\lra} \Sd_\C\stackrel{h_\C}{\lra} SO(V)_\C.$$

We may find connected normal subgroups $G_1$ and $G_2$ that commute such that the multiplication map $G_1\times G_2\ra G$ is an isogeny and the restriction of $p$ to $G_1$ is an isogeny onto $SO(V)$. Let 
$$\widetilde h : \Sd\lra (G_1\times G_2)_\R$$
be the fractional lift of $h_G$, and let $\widetilde h_1$ be the component of $\widetilde h$ mapping to $G_{1, \R}$. 

By Proposition \ref{proposition:unique-action}, we may find a nonzero sub-Hodge structure $H_1$ of $H$ such that the fractional Hodge structure induced by $\widetilde h_1$ on $H_1$ has type $\{(a+1, a), (a, a+1)\}$ for some rational number $a$. In particular, the fractional cocharacter
$$\widetilde \nu : \mathbb G_{m, \C}\lra G_{1, \C}$$
obtained by lifting $\nu$ has only two weights $a$ and $a+1$ on the representation $H_{1, \C}$ of $G_{1, \C}$.

If the dimension $n$ of $V$ is odd (resp. even), Proposition \ref{proposition:classification} shows that $H_1$ contains the spin representation (resp. one of the half-spin representations). In particular, there is a nonzero $\mathrm{CSpin(V_\C)}$-equivariant morphism from $H_{1, \C}$ to $H_{KS, \C}$, so that is a nonzero $\mathrm{CSpin(V)}$-equivariant morphism from $H_{1}$ to $H_{KS}.$ In particular, these two Hodge structures have a nonzero simple factor in common. 

The statement on the dimension on $T$ is now a direct consequence of Theorem \ref{theorem:simple-factors}.
\end{proof}

\bibliographystyle{alpha}
\bibliography{KS_Debarre}

\end{document}